\documentclass{amsart}
\usepackage{graphicx} 

\usepackage{amsmath}
\usepackage{amssymb}
\usepackage{mathrsfs}
\usepackage[english]{babel}
\usepackage{ot1patch}
\usepackage{xcolor}

\newtheorem{theorem}{Theorem}[section]
\newtheorem{lemma}[theorem]{Lemma}
\newtheorem{corollary}[theorem]{Corollary}
\newtheorem{proposition}[theorem]{Proposition}

\theoremstyle{remark}
\newtheorem{remark}[theorem]{Remark}

\newtheorem*{remark*}{Remark}

\theoremstyle{definition}
\newtheorem{definition}[theorem]{Definition}
\newtheorem{example}[theorem]{Example}

\numberwithin{equation}{section}

 \newcommand{\codim}{\operatorname{codim}}

\newcommand{\Rz}{\mathbb{R}}

\newcommand{\norm}[1]{\left\lVert#1\right\rVert}

\title[Directional curvature and medial axis]{Directional curvature and medial axis}
\author{Adam Bia\l o\.zyt}\address{AGH University of Krakow, Faculty of Applied Mathematics, al. Mickiewicza 30, 30-059 Krak\'ow, Poland}\email{bialozyt@agh.edu.pl}
\author{Dominik Bysiewicz}\address{Jagiellonian University, Faculty of Mathematics and Computer Science, \L ojasiewicza~6, 30-348 Krak\'ow, Poland}\email{dominik.bysiewicz@student.uj.edu.pl}
\author{Maciej P. Denkowski}\address{Jagiellonian University, Faculty of Mathematics and Computer Science, \L ojasiewicza~6, 30-348 Krak\'ow, Poland}\email{maciej.denkowski@uj.edu.pl}
\date{\today}

\begin{document}
\subjclass{32B20, 53A99, 14B05}
\keywords{Subanalytic geometry, o-minimal structures, medial axis, singularity theory}

\begin{abstract}
The medial axis $M_X$ of a closed set $X\subset \mathbb{R}^n$ is the set of points from the ambient space that admit more than one closest point in $X$. We study the problem of reaching the singularities, i.e. of characterising the points of the set $\overline{M_X}\cap X$. In order to tame the geometry, we assume that $X$ is definable in a polynomially bounded structure and obtain a general criterion based on a generalisation of the notion of {\it superquadraticity} previously introduced by Birbrair and Denkowski for $\mathscr{C}^1$-smooth hypersurfaces and extended to any codimension by Bia\l o\.zyt. We do not require any smoothness as we achieve our goal by introducing a notion of {\it directional curvature} in some naturally chosen {\it camber directions}. This allows us in particular to complete the study of the plane case. 
\end{abstract}
\maketitle

\centerline{\it Mihai Tib\u ar in memoriam}

\section{Introduction}

The notion of the {\it medial axis} $M_X$ of a given closed, proper subset $X\subset \mathbb{R}^n$ is well-known in pattern recognition as one of its main tools since its introduction by H. Blum in the sixties. It is defined as 
$$
M_X:= \{a\in \mathbb{R}^n\mid \# m(a) >1\},
$$
where $m(a):=\{x\in X\mid \|a-x\|=d(a,X)\}$ with the distance $d(a,X)$ from $a$ to $X$ calculated in the standard Euclidean norm. 

Even though medial axes have been studied extensively for quite a long time, their deep connection to singularity theory is a fairly recent discovery starting with \cite{D}, explored further in \cite{BD} (see also \cite{BDe} and \cite{DL} for a general overview), \cite{B}, \cite{BDD} (related also to \cite{Blne}, \cite{K} and especially \cite{Barx} which is the closest to the topic of our present paper. 

The main concern of the medial axis theory on the grounds of singularity theory is the question of {\it reaching the singularities}. In order to state it properly, let us introduce for $k\in \mathbb{N}$,
$$
\mathrm{Reg}_k X:=\{x\in X\mid \textrm{the germ}\ (X,x)\ \textrm{is}\ \mathscr{C}^k\textrm{-smooth}\}
$$
for the {\it $\mathscr{C}^k$-regular points} and 
$$
\mathrm{Sng}_k X:=X\setminus \mathrm{Reg}_k X
$$ 
for the set of {\it $\mathscr{C}^k$-singular points}. By what is known today as the {\it Nash Lemma} (see e.g. \cite{D}), 
$$
\overline{M_X}\cap \mathrm{Reg}_2 X=\varnothing,
$$
whence when looking for the intersection points $x\in \overline{M_X}\cap X$ we are compelled to study the $\mathscr{C}^2$-singular points of $X$. For any point $x\in X\cap \overline{M_X}$ of the intersection, the medial axis is said to {\it reach the singularity} $x$. Clearly, 
$$
\overline{M_X}\cap X\subset \mathrm{Sng_1 X}\cup (\mathrm{Reg}_1 X\cap \mathrm{Sng}_2 X)
$$
the union being disjoint. The main motivational picture behind the problem of reaching the singularities is given in Figure \ref{pic-par}.
\begin{figure}[h]
\includegraphics[width=8cm]{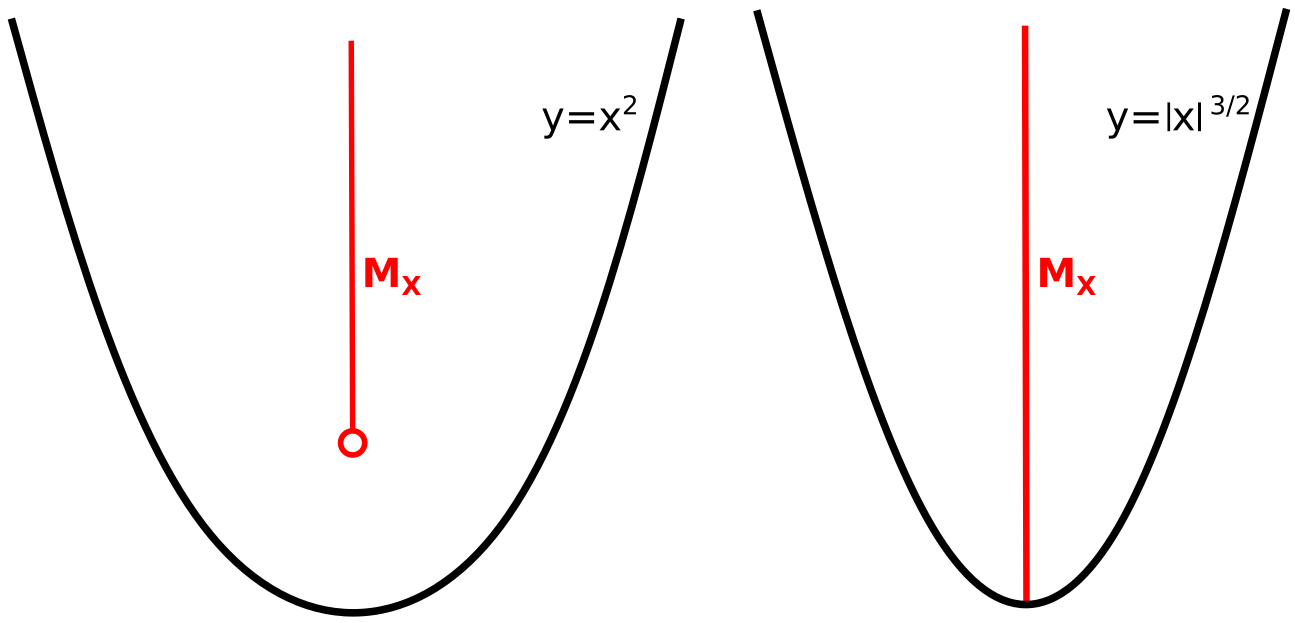}
\centering
\caption{Unlike the parabola, the curve $y=|x|^{3/2}$ is $\mathscr{C}^1$-smooth but not $\mathscr{C}^2$-smooth at the origin.}\label{pic-par}
\end{figure}

\smallskip
\textbf{Notation.}\\
We shall simplify the notation by putting
$$
X^{(1)}:=\mathrm{Sng_1 X}, \quad X^{(2)}:= (\mathrm{Reg}_1 X\cap \mathrm{Sng}_2 X).
$$

\smallskip
\textbf{Reaching Problem}\\
{\it    Characterize the points of the set $\overline{M_X}\cap X=\left(\overline{M_X}\cap X^{(1)}\right)\sqcup\left(\overline{M_X}\cap X^{(2)}\right)$.
}

\smallskip
As already noted, the Reaching Problem is thus split in two separate problems due to the fact that $X^{(1)}\cap X^{(2)}=\varnothing$. Studying the case of $\mathscr{C}^1$-singular and regular points usually calls for different methods. 

\smallskip
As a most appropriate illustration, consider the following simple examples (cf. \cite{DL})
 \begin{itemize}
    \item $X\colon y=|x|$ yields $X^{(1)}\cap \overline{M_X}=\{(0,0)\}=X\cap \overline{M_X}$;
    \item $X\colon y\geqslant |x|$ yields $X^{(1)}\cap \overline{M_X}=\varnothing=X\cap\overline{M_X}$;
    \item $X\colon y=|x|^{3/2}$ yields $X^{(2)}\cap \overline{M_X}=\{(0,0)\}=X\cap \overline{M_X}$;
    \item $X\colon y=\mathrm{sgn}(x)\cdot x^2$ yields $X^{(2)}=\{(0,0)\}$ but $(0,0)\notin\overline{M_X}$.
\end{itemize}

A rather surprising example is that of $X\colon y^2=x^3$ and even more that of a single branch $X\colon y=x^{3/2}, x\geq 0$ (see \cite{DL}) presented in Figure \ref{sch}. 
\begin{figure}[h]
\includegraphics[width=4.5cm]{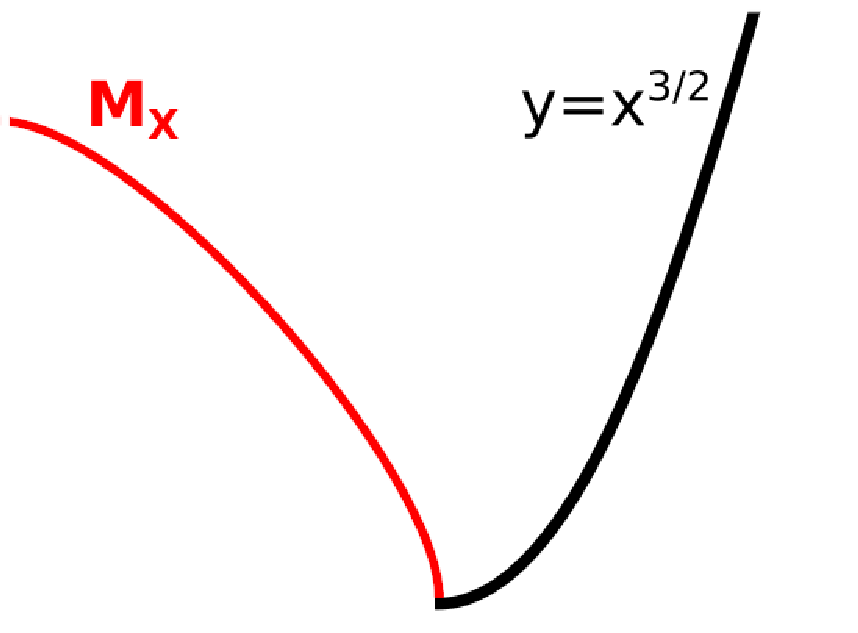}
\centering
\caption{The medial axis of the curve $y=x^{3/2}, x\geq 0$ has a `non-obvious' behaviour as a result of the curve's \textit{superquadraticity}.}\label{sch}
\end{figure}

The approaching of $M_X$ brings along also an interesting additional metric information about the singularity it reaches.


The natural setting for the study of the type of singularities reached by the medial axis is that of {\it tame geometry} i.e. subanalytic geometry or o-minimal structures. This is the point of view we will adopt throughout the paper, calling {\it definable} any set that is definable in a {\it polynomially bounded o-minimal structure}. For all these notions, we refer the reader to \cite{C} and the survey \cite{DD2}. As we are mainly interested in the local behaviour, there is no substantial difference between the subanalytic and the general definable case.

Up to now, only the case of subsets of $\mathbb{R}^2$ has been thoroughly understood; see \cite{BDe} and \cite{DL}; we complete this study in Sections \ref{plane geq} and \ref{plane <}. 

In the general case, we have the following {\it Tangent Cone Criterion} from \cite{BD} (see also \cite{Barx}) applicable only at singular points: if the {\it Peano tangent cone} of $X$ at $x$,
$$
C_xX:=\{v\in\mathbb{R}^n\mid \exists X\ni x_\nu\to x, \exists\lambda_\nu>0\colon \lambda_\nu(x_\nu-x)\to v\},
$$ 
is non-convex, then $x\in \overline{M_X}$ and, interestingly, $
        C_0(\overline{M_X})\supset\overline{M_{C_0X}}
        $ (with no equality, usually). Of course, this does not cover the case $x\in X^{(2)}$, 
        nor does it cover all possible cases for $x\in X^{(1)}$ 
        as the tangent cone might be linear even in the singular case. For example, $z^3=x^2+y^2$ describes a {\it horn surface} in $\mathbb{R}^3$ that is singular at the origin, the tangent cone degenerates to a half-line and the singularity is reached by the medial axis. On the other hand, the surface example of Ghomi and Howard 
        \cite{GH}: $z=xy(x^4+y^4)$ also has a $\mathscr{C}^1$-singularity at the origin reached by the medial axis, while the tangent cone at this point is a plane.

In \cite{BD} a description of the singular points reached by the medial axis is achieved through the introduction of the so-called {\it reaching radius} $r\colon X\to [0,+\infty]$ whose zero set is exactly $\overline{M_X}\cap X$ even in the non-definable case (the proof from \cite{BD} works in general). Nevertheless, we are still looking for a more detailed description of the type of singularities reached. A rather successful attempt to solve this problem for $X^{(2)}$ 
was made in \cite{BD} (see also \cite{DL}) in the case of a hypersurface $X$ by introducing the notion of {\it superquadratic points}. This is motivated by the situation encountered in the case of the plane. The notion of superquadraticity was carried over to $\mathscr{C}^1$-smooth points of any dimension in \cite{Barx} thanks to the fact that a submanifold is locally a graph over its tangent space. In essence, $X$ is called {\it superquadratic at} $a\in \mathrm{Reg}_1 X$ if the function 
$$
g_X(\varepsilon):=\max\{\|x-\pi(x)\|\mid x\in X\colon \|\pi(x)-a\|=\varepsilon\},\> 0\leqslant \varepsilon\ll 1,
$$
where $\pi\colon \mathbb{R}^n\to (T_aX+a)$ is the orthogonal projection onto the tangent space $T_a X$ translated to $a$ (see \cite{Barx}), satisfies
$$
\frac{g_X(\varepsilon)}{\varepsilon^2}\to +\infty\> (\varepsilon\to 0^+).
$$ The point is that in a polynomially bounded o-minimal structure, such a function is definable and can always be written as $$g_X(\varepsilon)=c\varepsilon^\eta+o(\varepsilon^\eta)$$ with $c,\eta>0$, provided its germ at the origin is non-zero (which is the case if $(X,a)$ is only $\mathscr{C}^1$-smooth  and not $\mathscr{C}^2$-smooth). The point $a$ being $\mathscr{C}^1$-regular, we have $\eta >1$ and superquadraticity means precisely that $\eta<2$. Since the exponent $\eta$ is just the order of vanishing $\mathrm{ord}_0 g_X$ of $g_X$ at the origin, it can be easily shown (cf. \cite{BD} Definition 3.10 and the remark that follows) that, in fact, assuming for simplicity $a=0$ and writing $\mathbb{R}^n=T_0 X\oplus (T_0 X)^\bot$ with the corresponding coordinates $x=u\oplus v$, 
$$
\mathrm{ord}_0 g_X=\max\{\theta>0\mid (X,0)\subset \{u\oplus v\mid \|v\|\leqslant \mathrm{const.}\|u\|^\theta\}\}.
$$
Then, if we put $$\mathcal{SQ}(X):=\{x\in \mathrm{Reg}_1 X\mid X\ \textrm{is superquadratic at}\ x\},$$ we have, of course, $\mathcal{SQ}(X)\subset X^{(2)}$ 
and the following theorem holds (\cite{BD} for $\codim X=1$, \cite{Barx} for the general case).
\begin{theorem}
    In the situation considered above, for any $a\in X^{(2)}$, 
    $$
    a\in \mathcal{SQ}(X)\Rightarrow\ a\in \overline{M_X}.
    $$
\end{theorem}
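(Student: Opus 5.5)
We argue by contradiction, using the characterisation of $\overline{M_X}\cap X$ as the zero set of the reaching radius. Assume $a=0\in\mathcal{SQ}(X)$ but $0\notin\overline{M_X}$. Then there is a neighbourhood $U$ of $0$ disjoint from $M_X$. By the result of \cite{BD} recalled above, the reaching radius is positive at $0$. Concretely, we expect this to give $r>0$ and $\delta>0$ with the following property: for every $x\in X\cap B(0,\delta)$ and every unit vector $w$ normal to $X$ at $x$ (a proximal normal), the point $x+tw$ has $x$ as its unique closest point in $X$ for $0\le t\le r$. Equivalently, the open balls $B(x+rw,r)$ do not meet $X$. We apply this at the point $0$ itself, which is $\mathscr C^1$-regular with tangent space $T:=T_0X$. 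Write $\mathbb{R}^n=T\oplus T^\perp$, $x=u\oplus v$. The key consequence is that every unit $w\in T^\perp$ gives a ball $B(rw,r)$ disjoint from $X$, i.e. $X$ near $0$ avoids the union of all balls of radius $r$ tangent to $T$ at $0$.

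The step needing the most care is justifying that all normal directions $w\in T^\perp$ are proximal at $0$ with a uniform radius. I would do it directly from $0\notin\overline{M_X}$. Fix a unit $w\in T^\perp$ and let $t_0:=\sup\{t\ge0: m(tw)=\{0\}\}$. If $t_0=0$, take small $t>0$ and $y\in m(tw)$ with $y\neq0$. Following the closest-point map along the segment from $tw$ to $0$, by upper semicontinuity of $m$ and the fact that $m$ is single-valued off $M_X$, the projection would have to jump. One shows that near $0$, with $m$ single-valued and hence continuous, the map $p\mapsto m(p)$ is continuous on a neighbourhood $U\setminus X$. Points $p=tw$ close to $0$ project to points $m(p)\to0$, and since $p-m(p)$ is normal to $X$ at $m(p)$, by $\mathscr C^1$-regularity $(p-m(p))/\|p-m(p)\|$ is close to $T^\perp$. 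A standard continuity/degree argument on the normal sphere bundle over a small neighbourhood of $0$ then shows that the segments $[x,x+sw]$, $w\perp T_xX$, $s<r$, foliate a neighbourhood without collision. This is essentially the local $\mathscr C^{1,1}$-type "positive reach" statement, which I take from the reaching radius result of \cite{BD}, and the uniformity in $w$ comes from compactness of the unit sphere of $T^\perp$ together with lower semicontinuity of the reaching radius along normals.

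Given the balls, the contradiction is a computation. If $x=u\oplus v\in X$ with $v\neq0$, choose $w=v/\|v\|$. Then $x\notin B(rw,r)$ means $\|u\|^2+(\|v\|-r)^2\ge r^2$, i.e. $\|u\|^2+\|v\|^2\ge 2r\|v\|$. Since near $0$ we have $\|v\|=o(\|u\|)$ by $\mathscr C^1$-regularity, this yields $\|v\|\le C\|u\|^2$ for $x\in X$ near $0$. Hence $(X,0)\subset\{\|v\|\le C\|u\|^2\}$. By the characterisation of $\mathrm{ord}_0g_X$ recalled above, this gives $\eta\ge2$, contradicting superquadraticity ($\eta<2$). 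Therefore $0\in\overline{M_X}$.

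The main obstacle is the uniform ball property at $0$ in all normal directions, including the case $\codim X>1$ where $T^\perp$ is a sphere rather than two points. My first attempt would be to show it via continuity of $m$ on $U\setminus X$. I would also use that for each $w$, points $tw$ with $t$ small must project near $0$ with normal direction near $w$. Surjectivity of the normal map onto directions, via a degree argument on the normal sphere bundle, then prevents any point of $X$ from entering $B(rw,r)$.
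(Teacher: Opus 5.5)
Your argument is correct, and the step you flag as the main obstacle is immediate from the definitions, so you can drop the degree and continuity sketch. At the $\mathscr C^1$-regular point $0$ one has $C_0X=T$, hence $V_0(X)=T^\perp\cap\mathbb{S}^{n-1}$. Definition \ref{rr} gives $r(0)\leqslant r'(0)=\inf_{w\in V_0(X)}r_w(0)$. So if $0\notin\overline{M_X}$, Theorem \ref{radius} yields $r_w(0)\geqslant R:=r(0)>0$ simultaneously for all unit $w\in T^\perp$. The balls $B(tw,t)$ are nested and exhaust $B(Rw,R)$ as $t\to R^-$, so this is precisely $B(Rw,R)\cap X=\varnothing$.

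The uniformity therefore comes from the infimum defining $r'(0)$, not from compactness. Your claimed lower semicontinuity of $w\mapsto r_w(0)$ fails in general: the condition $B(tw,t)\cap X=\varnothing$ is closed in $w$, so this function is upper semicontinuous. Also, the ball property at nearby points $x\neq 0$ is never used. Your final computation is fine. Note that $\|v\|\leqslant R$ alone already gives $\|v\|\leqslant \|u\|^2/R$, i.e. $g_X(\varepsilon)\leqslant \varepsilon^2/R$, which contradicts $g_X(\varepsilon)/\varepsilon^2\to+\infty$.

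The paper obtains the statement as a special case of Corollary \ref{wniosek}: at a $\mathscr C^1$-regular point, $(*)$ holds with $\mathrm{Aff}(C_0X)=T_0X$, and $\theta(X,0)=\mathrm{ord}_0 g_X$ by Lemma \ref{order}. Its route is forward. Theorem \ref{theta} produces a single camber direction $v$ with $\theta_v(X,0)<2$. A definable curve realising the shape function $g_{X,v}$ (Lemma \ref{rzad}) is projected to a plane, where Lemma 3.17 of \cite{BD} shows it enters every ball $B(\rho v,\rho)$, $0<\rho\ll 1$. This gives $r_v(0)=0$ for that fixed $v$ (up to sign), and Theorem \ref{kierunek1} then yields both $0\in\overline{M_X}$ and $\pm v\in C_0M_X$.

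Your contrapositive instead tests each point $u\oplus v\in X$ against the ball in its own normal direction $v/\|v\|$. It therefore only shows that the infimum $r'(0)$ vanishes, and loses the information about which normal direction is tangent to the medial axis. In exchange it is a two-line computation that bypasses Theorem \ref{theta}, curve selection and the shape functions, and it uses no definability at all. Run with camber directions (Remark \ref{jeden}) in place of $T^\perp$, the same argument proves Corollary \ref{wniosek} for any closed set satisfying $(*)$.
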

As shown in \cite{Barx}, the converse is generally true only in the case of $\dim X=1$.

\medskip
\noindent\textbf{Aims and results}

The aim of the present paper is to extend the notion of superquadratic points and the theorem quoted above also to $\mathscr{C}^1$-singular points. We shall do this under some mild assumptions that encompass the previous example of a horn (Example \ref{horn}). We obtain a general notion that is valid for $X^{(2)}$ and $X^{(1)}$ as well, and gives new insight into the geometry of the germ through a new criterion for the reaching of the singularity. 

Our main results are Theorems \ref{kierunek1} coupled with \ref{kierunek} and the Corollary \ref{wniosek}. They feature the {\it directional curvature} and the {\it general curvature} we introduce in Definitions \ref{dc} and \ref{gc}, respectively. They are well posed under the mild assumption $(*)$ from Section \ref{section dc}. These results are preceded by the case of the plane in Sections \ref{plane geq} with Theorem \ref{plane geq 2} and Section \ref{plane <} with a straightforward proof of Theorem \ref{plane <2}. Moreover, the plane allows for a result, Theorem \ref{plane geq 2}, that cannot be extended to higher dimensions as the example constructed in Proposition \ref{higher geq 2} shows.

\section{Directional curvature}\label{section dc}

For all of the following results we assume the closed, definable germ $(X,0)\subsetneq (\mathbb{R}^n,0)$ to satisfy the following assumption: the affine hull of the Peano tangent cone of $X$ at zero is of the form $$\mathrm{Aff}(C_0X)= \mathbb{R}^k\times \{0\}^{n-k},\leqno{(*)}$$
where $1\leqslant k<n$. Note that there is no simple relation between $k$ and $\dim_0 X$, even though $\dim C_0X\leq \dim_0 X$.
\begin{remark}\label{pierwsza}
    Assuming $(*)$ yields in particular $C_0X\cap \{0\}^k\times\mathbb{R}^{n-k}=\{0\}^n$ which is equivalent to the inclusion 
    $$
    X\cap U\subset\{(x,y)\in \mathbb{R}^k\times\mathbb{R}^{n-k}\mid \|y\|\leqslant C\|x\|\},
    $$
    for some small neighborhood $U$ of $0\in\mathbb{R}^n$ and some constant $C>0$.
\end{remark}
    

In addition, but only for $n=2$, we will require that $X$ be `filled horizontally'; instead of considering $X_1:=\{(x,y)\mid y^2=x^4\}$, we prefer to turn the attention to $X_2:=\{(x,y)\mid y^2\leqslant x^4\}$. Filling in the part of the plane that lies beyond $X$ in horizontal direction prevents $M_X$ from approaching $0$ from the tangent directions ( cf. $0\in\overline{M_{X_1}}$, whereas $0\notin\overline{M_{X_2}}$), which is key in the later results.
\begin{figure}[h]
\includegraphics[width=8cm]{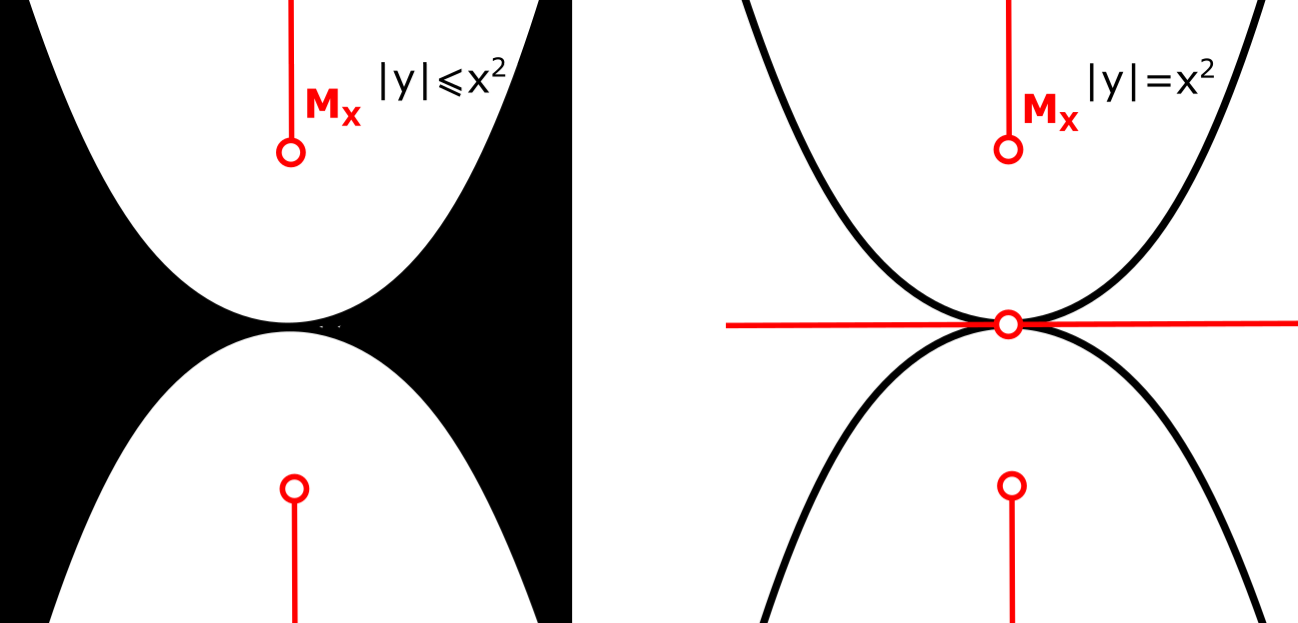}
\centering
\caption{If the $X$ is not horizontally filled
, then naturally the origin is achievable by $M_X$
, for any $X$ satisfying $(\ast)$.}
\end{figure}

To define things precisely, let $B(0,2r)$ be the disk of radius $2r>0$, centered at $0$. Thanks to $(\ast)$, we can take the radius $r$ small enough to get $B(0,2r)\cap X \cap\{x=0\}=\{0\}$. We will call the set $X$ {\it filled horizontally} if for any $r>0$ smaller than some initial $r_0>0$, the set $B(0,2r)\setminus X$ has at most two connected components. One containing $(0,r)$ and (possibly the other) containing $(0,-r)$.

Let us recall the notion of the {\it normal cone} to $X$ at $0\in X$:
$$
N_0(X):=\{v\in\mathbb{R}^n\mid \forall w\in C_0X,\ \langle v,w \rangle\leqslant 0\}.
$$
We will consider {\it normal directions} i.e. vectors from $$V_0(X):=N_0(X)\cap \mathbb{S}^{n-1},$$ or rather a subset of this set.

\begin{remark}\label{jeden} In the situation introduced above, for any $v\in \{0\}^k\times \mathbb{R}^{n-k}\cap \mathbb{S}^{n-1}$, we have $v\in V_0(X)$. Moreover, $\mathbb{R}v\cap C_0X = \{0\}$ which implies that considering the decomposition $\mathbb{R}^n = (\mathbb{R}v)^{\perp} \oplus \mathbb{R}v$, there exists $c>0$ such that:
$$
(X,0)\subset \{z\oplus tv\in (\mathbb{R}v)^{\perp} \oplus \mathbb{R}v\mid |t|\leqslant c\norm{z}\},
$$
which is to be understood in the sense that a representative of the germ is contained in the set on the right-hand side (as in Remark \ref{pierwsza}).
\end{remark}
This remark allows us to state the following definition in which we use
$$
P_v(\theta,c):=\{z\oplus tv\in (\mathbb{R}v)^{\perp} \oplus \mathbb{R}v\mid |t|\leqslant c\norm{z}^\theta\}
$$
defined for $\theta>0$ and $c>0$.

\begin{definition}\label{dc} Under the assumption $(*)$ we define the \textit{directional curvature} in the direction $v\in V_0^C (X):=\{0\}^k\times \mathbb{R}^{n-k}\cap \mathbb{S}^{n-1}$ of $(X,0)$ to be 
$$
\theta_v(X,0) := \sup\{\theta>0\mid  (X,0)\subset P_v(\theta,c), \ \textrm{for some}\ c>0\}\in [1,+\infty].
$$
We call the directions $v\in V_0^C (X)\subset V_0(X)$ used here {\it camber directions}.
\end{definition}

\begin{lemma}\label{skoncz}
    Given $v\in V_0^C (X)$, the least upper bound $\theta_v(X,0)$ is finite, unless $(X,0)\subset (\mathbb{R}v)^\bot$.
\end{lemma}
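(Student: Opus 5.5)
We argue by contraposition. Suppose $(X,0)\not\subset(\mathbb{R}v)^\bot$; we show that $\theta_v(X,0)<+\infty$.

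\emph{Step 1: the height function.} Write points as $z\oplus tv$ with $z\in(\mathbb{R}v)^\perp$ and $t=\langle x,v\rangle$. Fix a small closed definable representative of $X$ in $B(0,r)$. Consider
$$
h(\varepsilon):=\max\{|\langle x,v\rangle| \mid x\in X,\ \|x-\langle x,v\rangle v\|=\varepsilon\},\quad 0<\varepsilon\ll1.
$$
This function is definable. It is well defined (possibly after shrinking the domain), since by Remark~\ref{jeden} we have $|t|\leqslant c\|z\|$ on $X$, so the sets involved are compact.

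We also need $h\not\equiv0$ near $0$. Since $X\not\subset(\mathbb{R}v)^\perp$ in every neighbourhood, there are points $x_\nu\to0$ in $X$ with $t_\nu\neq0$. Remark~\ref{jeden} gives $\|z_\nu\|\geqslant|t_\nu|/c>0$ and $z_\nu\to0$. Hence $h(\|z_\nu\|)>0$ along a sequence $\|z_\nu\|\to0^+$. By o-minimality (monotonicity theorem), the germ of $h$ at $0^+$ is therefore not identically zero, so it is strictly positive on some $(0,\delta)$.

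\emph{Step 2: polynomial boundedness.} Since the structure is polynomially bounded, $h(\varepsilon)=a\varepsilon^\eta+o(\varepsilon^\eta)$ with $a>0$ and $\eta$ a finite exponent. Moreover $\eta\geqslant1$, because $h(\varepsilon)\leqslant c\varepsilon$.

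\emph{Step 3: comparison with $P_v(\theta,c)$.} Take any $\theta>\eta$ and suppose $(X,0)\subset P_v(\theta,c')$ for some $c'>0$. Then $h(\varepsilon)\leqslant c'\varepsilon^\theta$ for small $\varepsilon$, and hence
$$
a\leqslant\lim_{\varepsilon\to0^+} c'\varepsilon^{\theta-\eta}=0,
$$
a contradiction. Therefore every admissible $\theta$ satisfies $\theta\leqslant\eta$, and so $\theta_v(X,0)\leqslant\eta<\infty$.

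(In fact the definition of $h$ also gives $X\subset P_v(\theta,c)$ for every $\theta<\eta$, so $\theta_v=\eta$; this is not needed for the lemma.)

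\emph{Main obstacle.} The delicate point is Step 1: making sure that $h$ is a genuine definable function on a punctured interval and that the non-inclusion hypothesis forces a nonzero germ rather than mere values along a sequence. This uses Remark~\ref{jeden} to control $\|z\|$ from below by $|t|$, and then o-minimal monotonicity to pass from a sequence to the germ. Once that is settled, the polynomial-boundedness expansion finishes the argument at once.
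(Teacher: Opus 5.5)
Your proof is correct, but it takes a different route from the paper's. The paper applies the Curve Selection Lemma to get a single definable arc $\gamma(s)=z(s)\oplus\tau(s)v$ in $X$ with $\tau(s)\neq 0$ for $s>0$. It reparametrizes so that $\|z(s)\|=s$, and then notes that the order of $\tau$ bounds $\theta_v(X,0)$ from above, because $(X,0)\subset P_v(\theta,c)$ forces $|\tau(s)|\leqslant c s^\theta$. That order is finite by polynomial boundedness.

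You work instead with the extremal function $h$. This is exactly the shape function $g_{X,v}$, which the paper only introduces in Section 5.

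The cost of your route is some o-minimal bookkeeping that a single curve avoids. Two definable subsets of $\mathbb{R}$ accumulate at $0$ from the right: the set of attained values $\|z\|$, and the set $\{h>0\}$. Each must therefore contain an interval $(0,\delta)$. You compress this into ``possibly after shrinking the domain'' and ``monotonicity''. That is acceptable, but it deserves to be said explicitly.

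The gain is that your argument proves more in one stroke. From $h(\varepsilon)\leqslant 2a\varepsilon^\eta$ for small $\varepsilon$ you get $(X,0)\subset P_v(\eta,2a)$, so your parenthetical remark sharpens from $\theta<\eta$ to $\theta=\eta$. Hence the supremum is attained, lies in $\mathbb{F}$, and equals $\mathrm{ord}_0 g_{X,v}$. You thus recover Remark~\ref{osiaganie} without the \L ojasiewicz inequality, and Lemma~\ref{rzad} without the definable selection. The paper's witness curve, by contrast, need not realise $\theta_v(X,0)$ and yields only finiteness.

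A minor wording point: what you give is a direct proof of ``not contained $\Rightarrow$ finite'', not a contraposition.
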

\begin{proof}
    By the Curve Selection Lemma we find a definable curve $\gamma\colon [0,\varepsilon)\to
X$ with $\gamma\cap (\mathbb{R}v)^\bot=\{0\}$. Now, $\gamma(t)=(z(t)\oplus\tau(t)v)\in(\mathbb{R}v)^\bot\oplus \mathbb{R}v$ means that after identifying the direct sum with the product $H\times \mathbb{R}$ where $H=(\mathbb{R}v)^\bot$ we may reparametrize $z(t)$ by the distance to the origin. Indeed, the definability of $z(t)$ implies that for all sufficiently small $t>0$, $z(t)\cap t\mathbb{S}^{n-2}$ reduces to a single point. Next, having $\|z(t)\|=t$ allows us to identify the curve $\gamma(t)$ with $\eta(t)=(t,\tilde{\tau}(t))$. Then it is easy to see that $\mathrm{ord}_0\eta(t)$ (understood as the minimal order of the components $\eta=(\eta_1,\dots,\eta_n)$) is an upper bound for $\theta_v(X,0)$ and by construction this $\mathrm{ord}_0\eta(t)$ is finite.
\end{proof}
\begin{remark}\label{osiaganie}
Unless $(X,0)\subset (\mathbb{R}v)^\bot$, the least upper bound is attained and belongs to the field of exponents $\mathbb{F}$ of the o-minimal structure (which is $\mathbb{Q}$ in the globally subanalytic or semi-algebraic cases). In fact, if we consider the two projections $\rho(z\oplus tv)=z$ and $p(z\oplus tv)=t$, we see that $\rho^{-1}(0)\cap X\subset p^{-1}(0)\cap X$ (cf. Remark \ref{jeden}). This readily implies that these two functions satisfy on $X$ the \L ojasiewicz inquality (cf. \cite{DD2}) and, in fact, the number $\theta_v(X,0)$ corresponds to their \L ojasiewicz exponent, which we know to be attained (see \cite{DD2} or \cite{BD} and the references to this classical Bochnak-Risler result therein).
\end{remark}

\begin{lemma}\label{tangentcomponent}
    The directional curvature $\theta_v(X,0)$ in the camber direction $v$ is equal to \[\sup\{\theta>0\mid x\in X\Rightarrow |\langle v,x\rangle|\leqslant \mathrm{const.}\|\pi(x)\|^\theta\},\] where $\pi$ denotes an orthogonal projection on $\mathrm{Aff}(C_0X)$.
\end{lemma}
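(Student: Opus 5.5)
We compare the two families of admissible exponents directly. Fix $v\in V_0^C(X)$ and write a point of $\mathbb{R}^n$ as $x=z\oplus tv$ with $z\in(\mathbb{R}v)^\bot$ and $t=\langle v,x\rangle$. Since $v\in\{0\}^k\times\mathbb{R}^{n-k}$ and $\mathrm{Aff}(C_0X)=\mathbb{R}^k\times\{0\}^{n-k}$ by $(*)$, we have $\mathbb{R}^k\times\{0\}^{n-k}\subset(\mathbb{R}v)^\bot$. Hence $\pi(x)=\pi(z)$, and also $\|\pi(x)\|\leqslant\|z\|\leqslant\|x\|$. Let $A$ denote the set of exponents in Definition \ref{dc} and $B$ the set of exponents in the statement. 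It suffices to show that $A\cap[1,\infty)=B\cap[1,\infty)$, or at least that each set is cofinal in the other. Both suprema are at least $1$: for $A$ this is Remark \ref{jeden}, and for $B$ it will follow from the comparison below.

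\emph{Step 1: $B\subset A$.} Take $\theta\in B$, so $|t|\leqslant C\|\pi(x)\|^\theta$ on $X$. Since $\|\pi(x)\|\leqslant\|z\|$, this gives $|t|\leqslant C\|z\|^\theta$, that is, $(X,0)\subset P_v(\theta,C)$.

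\emph{Step 2: $A\subset B$.} This is the step that needs care. We have to control $\|z\|$ by $\|\pi(x)\|$ on $X$. We invoke Remark \ref{pierwsza}: on $X\cap U$ we have $\|y\|\leqslant C'\|\pi(x)\|$, where $x=(\pi(x),y)$ in $\mathbb{R}^k\times\mathbb{R}^{n-k}$. Since $z=\pi(x)\oplus(y-tv)$ with $y-tv$ the component of $y$ orthogonal to $v$, we get
\[
\|z\|\leqslant\|x\|\leqslant(1+C')\|\pi(x)\|.
\]
Hence, if $|t|\leqslant c\|z\|^\theta$ on $X$, then $|t|\leqslant c(1+C')^\theta\|\pi(x)\|^\theta$, so $\theta\in B$.

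Combining the two steps gives $A=B$, so the suprema coincide. In particular both are at least $1$ and both are $+\infty$ in the same case, namely $(X,0)\subset(\mathbb{R}v)^\bot$ by Lemma \ref{skoncz}.

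The only nontrivial ingredient is the norm equivalence $\|\pi(x)\|\simeq\|x\|$ on the germ in Step 2, and this is exactly what assumption $(*)$ provides through Remark \ref{pierwsza}. Every other step is a direct inequality.
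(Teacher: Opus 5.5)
Your proof is correct and follows essentially the paper's argument: $\|\pi(x)\|\leqslant\|z\|$ gives one inequality, and the cone bound coming from $(*)$ gives $\|z\|\leqslant\mathrm{const.}\|\pi(x)\|$ on the germ, which yields the other. The only differences are minor. You compare the sets of admissible exponents directly, so you do not need the attainment of $\theta_v(X,0)$ from Remark \ref{osiaganie} that the paper uses. You also cite the right source for the bound on the component orthogonal to $\mathbb{R}^k\times\{0\}^{n-k}$, namely Remark \ref{pierwsza}, where the paper cites Remark \ref{jeden}.
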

\begin{proof}
Observe first that the lemma holds when $(X,0)\subset (\mathbb{R}v)^\perp$. In that case, the supremum and $\theta_v(X,0)$ are simply equal to infinity. For the rest of the proof let us assume that $(X,0)\not\subset(\mathbb{R}v)^\perp$. 

Take a decomposition of $\mathrm{Aff}(C_0X)^\perp = U\oplus \mathbb{R}v$ and denote $x\in X$ as $x=x_1\oplus x_2\oplus tv\in \mathrm{Aff}(C_0X)\oplus U \oplus \mathbb{R}v$. We have then $x_1=\pi(x)$ and $t=\langle v,x\rangle$.

It is plain to see that $\|x_1\|^\theta\leqslant \|z\|^\theta$ for any positive exponent $\theta$, thus \[\sup\{\theta>0\mid x\in X\Rightarrow |\langle v,x\rangle|\leqslant \mathrm{const.}\|\pi(x)\|^\theta\}\leqslant \theta_v(X,0).\]

To prove the opposite inequality, we first follow the Remark~\ref{osiaganie}, to obtain a positive constant $c$ such that $|t|\leqslant c\|z\|^{\theta_v(X,0)}$. Moreover, Remark~\ref{jeden} allows us to estimate $\|z\|\leqslant \|x_1\|+\|x_2\|\leqslant \tilde{c}\|x_1\|$. For any $x_1\oplus x_2\oplus tv\in X$, we then have
\[|t|\leqslant C\|x_1\|^{\theta_v(X,0)}\]
with $C=c\tilde{c}^{\theta_v(X,0)}$. Consequently
\[ \theta_v(X,0) \leqslant \sup\{\theta>0\mid x\in X\Rightarrow |\langle v,x\rangle|\leqslant \mathrm{const.}\|\pi(x)\|^\theta\}.\qedhere\]
\end{proof}

\begin{definition}\label{gc} The \textit{general curvature} of the germ $(X,0)\subset \mathbb{R}_x^k\times \mathbb{R}_y^{n-k}$ satisfying $(*)$ is defined to be 
$$
\theta(X,0):= \sup \left\{\theta>0\mid (X,0)\subset \left\{\norm{y}\leqslant \mathrm{const.}\norm{x}^{\theta}\right\}\right\}.
$$
\end{definition}
\begin{lemma}
Unless $(X,0)\subset \mathbb{R}^k\times \{0\}^{n-k}$, the supremum in the definition of $\theta(X,0)$ is finite and attained in $\mathbb{F}_{\geqslant 1}$.
\end{lemma}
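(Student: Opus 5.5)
The plan is to recast $\theta(X,0)$ as a \L ojasiewicz exponent, in the spirit of Remark~\ref{osiaganie}. On a representative of $X$, consider the two continuous definable functions $f(x,y):=\|x\|$ and $g(x,y):=\|y\|$. By Remark~\ref{pierwsza}, $\|y\|\leqslant C\|x\|$ on $X\cap U$. Hence $f^{-1}(0)\cap X\cap U=\{0\}\subset g^{-1}(0)$, and we may also take $\theta=1$ with constant $C$ in Definition~\ref{gc}. This already shows that the supremum is at least $1$, as in Definition~\ref{dc}.

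Next we apply the \L ojasiewicz inequality in a polynomially bounded structure (Bochnak--Risler type result, \cite{DD2}, \cite{BD}) to the pair $(f,g)$ on the germ $(X,0)$. Since $g$ vanishes wherever $f$ does, the set of exponents $\theta$ with $g\leqslant \mathrm{const.}f^\theta$ near $0$ has a maximal element. This element lies in $\mathbb{F}$, and it is finite as soon as $g\not\equiv 0$ on the germ, i.e. unless $(X,0)\subset\mathbb{R}^k\times\{0\}^{n-k}$. Combined with the first step, this maximum belongs to $\mathbb{F}_{\geqslant1}$, and it is exactly $\theta(X,0)$.

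If one prefers not to quote the classical result directly, I would reproduce it as follows. Put
$$
\varphi(\varepsilon):=\max\{\|y\| \mid (x,y)\in X,\ \|x\|=\varepsilon\}
$$
for small $\varepsilon>0$. The maximum exists because the fibre is compact, and $\varphi$ is definable. Remark~\ref{pierwsza} gives $\varphi(\varepsilon)\leqslant C\varepsilon$, and every point of $X$ near $0$ lies in some fibre $\|x\|=\varepsilon$. Hence $(X,0)\subset\{\|y\|\leqslant c\|x\|^\theta\}$ holds if and only if $\varphi(\varepsilon)\leqslant c\varepsilon^\theta$ for small $\varepsilon$.

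Now assume $(X,0)\not\subset \mathbb{R}^k\times\{0\}^{n-k}$. Curve selection applied to $X\setminus(\mathbb{R}^k\times\{0\})$ gives points with $y\neq0$ arbitrarily close to $0$, and for these points $x\neq0$ by Remark~\ref{pierwsza}. Consequently $\varphi$ does not vanish identically near $0^+$. By o-minimality it is then nonzero on some interval $(0,\delta)$. Polynomial boundedness then yields $\varphi(\varepsilon)=a\varepsilon^\eta+o(\varepsilon^\eta)$ with $a>0$ and $\eta\in\mathbb{F}$, and the bound $\varphi\leqslant C\varepsilon$ forces $\eta\geqslant1$. Finally, $\varphi\leqslant c\varepsilon^\theta$ near $0$ holds precisely when $\theta\leqslant\eta$. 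So the supremum equals $\eta$, which is finite and attained with $c=2a$.

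The main obstacle is the step showing that $\varphi$ is not identically zero near $0$ and has a genuine leading term. This is where definability and polynomial boundedness enter, and curve selection together with the monotonicity/asymptotic theorem in a polynomially bounded structure should handle it routinely.
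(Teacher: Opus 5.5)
Your proposal is correct, and its main line is the paper's proof: Remark~\ref{pierwsza} gives $\pi_k^{-1}(0)\cap X\subset\pi_{n-k}^{-1}(0)\cap X$, so $\theta(X,0)$ is the \L ojasiewicz exponent of $\|y\|$ with respect to $\|x\|$ on $X$, and the classical attainment result in a polynomially bounded structure finishes the proof; you also make explicit the lower bound $\theta(X,0)\geqslant 1$, which the paper leaves implicit. Your self-contained fallback via $\varphi$ (the function $g_X$ of Lemma~\ref{order}) is also sound and needs only the one-variable expansion $\varphi(\varepsilon)=a\varepsilon^\eta+o(\varepsilon^\eta)$; the one detail to add is that the fibres $X\cap\{\|x\|=\varepsilon\}$ must be nonempty for all small $\varepsilon>0$, which follows from the same o-minimality argument you use for $\{\varphi>0\}$.
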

\begin{proof}
Put $\pi_k(x,y)=x$ and $\pi_{n-k}(x,y)=y$. Then by Remark \ref{pierwsza}, $\pi_k^{-1}(0)\cap X\subset \pi_{n-k}^{-1}(0)\cap X$ and as earlier it is easy to see that $\theta(X,0)$ corresponds to their \L ojasiewicz exponent. Hence, the result.
    \end{proof}

\begin{definition} We call the germ $(X,0)$ \textit{superquadratic} if $\theta(X,0)<2$.\end{definition}

This encompasses the polynomially bounded cases considered in \cite{BD} and \cite{Barx}. Note also that the definition somehow fixes the coordinate system.

\begin{proposition}
    If $0\in\mathrm{Reg}_2 X$, then $\theta(X,0)\geqslant 2$.
\end{proposition}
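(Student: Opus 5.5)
If $0\in\mathrm{Reg}_2 X$, then near $0$ the set $X$ is a $\mathscr{C}^2$ submanifold of some dimension $d\geqslant 1$. The plan is to show that in the coordinates fixed by $(*)$, $X$ is locally the graph of a $\mathscr{C}^2$ map $y=f(x)$ with $f(0)=0$ and $Df(0)=0$. Taylor's formula then gives $\|f(x)\|\leqslant \mathrm{const.}\|x\|^2$, so $\theta(X,0)\geqslant 2$ by Definition \ref{gc}. (If $(X,0)\subset\mathbb{R}^k\times\{0\}^{n-k}$, the supremum is $+\infty$ and there is nothing to prove, but the argument below covers this case too.)

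\textbf{Step 1: identify the tangent space.} For a $\mathscr{C}^1$ submanifold the Peano tangent cone is the tangent space, so $C_0X=T_0X$. This is a linear subspace, hence $\mathrm{Aff}(C_0X)=T_0X$. Assumption $(*)$ then forces $T_0X=\mathbb{R}^k\times\{0\}^{n-k}$, and in particular $d=k$.

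\textbf{Step 2: graph representation.} The projection $\pi_k(x,y)=x$ restricted to $X$ has differential at $0$ equal to $\pi_k|_{T_0X}$, which is an isomorphism onto $\mathbb{R}^k$. By the inverse function theorem applied to a local $\mathscr{C}^2$ parametrisation of $X$, the germ $(X,0)$ is the graph of a $\mathscr{C}^2$ map $f\colon(\mathbb{R}^k,0)\to(\mathbb{R}^{n-k},0)$. Since the tangent space of the graph at $0$ is $\mathbb{R}^k\times\{0\}$, we get $Df(0)=0$.

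\textbf{Step 3: quadratic estimate.} Taylor's formula with remainder gives $\|f(x)\|\leqslant \tfrac12\sup_{\|\xi\|\leqslant\|x\|}\|D^2f(\xi)\|\,\|x\|^2\leqslant M\|x\|^2$ on a small closed ball, because $D^2f$ is continuous and hence bounded there. Therefore, for a suitable neighbourhood $U$ of $0$,
\[
X\cap U\subset\{\|y\|\leqslant M\|x\|^2\},
\]
so $\theta=2$ belongs to the set whose supremum defines $\theta(X,0)$. This proves $\theta(X,0)\geqslant 2$.

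The only point that needs care is Step 1: we must use that for a $\mathscr{C}^1$ germ the tangent cone is exactly the linear tangent space, so that $(*)$ pins $T_0X$ to $\mathbb{R}^k\times\{0\}$. The remaining steps are routine.
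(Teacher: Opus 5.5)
Your proof is correct and takes essentially the same approach as the paper's. You identify $C_0X$ with $T_0X$ so that $(*)$ forces $T_0X=\mathbb{R}^k\times\{0\}^{n-k}$, then write $(X,0)$ as the graph of a $\mathscr{C}^2$ germ $f$ with $d_0f=0$, and conclude $\|f(x)\|\leqslant\mathrm{const.}\|x\|^2$ from Taylor's formula.
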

\begin{proof}
    Since $C_0X$ coincides with the tangent space $T_0 X$, we conclude that $(*)$ reads $T_0X=\mathbb{R}^k\times\{0\}^{n-k}$ with $k=\dim_0 X$. Then $(X,0)$ is the graph of a $\mathscr{C}^2$-smooth map germ $f\colon (\mathbb{R}^k,0)\to (\mathbb{R}^{n-k},0)$ of the first $k$ variables with differential $d_0 f=0$. Therefore, $f(x)=\frac{1}{2}d_0^2f(x,x)+o(\|x\|^2)$ which implies $\|f(x)\|\leqslant\mathrm{const.}\|x\|^2$ in a neighbourhood of the origin and the assertion follows.
\end{proof}

As we shall need the {\it reaching radius} $r\colon X\to [0,+\infty]$, let us briefly recall its definition from \cite{BD} (see also \cite{DL}):  
\begin{definition}\label{rr} We first define the {\it weak reaching radius} of $X$ at $a\in X$ by 
$$
r'(a)=\inf_{v\in V_a(X)} r_v(a)
$$
where 
$$
r_v(a)=\sup\{t\geqslant 0\mid a\in m(a+tv)\}
$$
is the {\it directional reaching radius}. Next, we put
$$
\tilde{r}(a)=\liminf_{X\setminus\{a\}\ni x\to a} r'(x)
$$
for the {\it limiting reaching radius}. Finally, we define the {\it reaching radius} as 
$$
r(a)=\begin{cases}
r'(a), &a\in\mathrm{Reg}_2 X,\\
\min\{r'(a),\tilde{r}(a)\}, &a\in\mathrm{Sng}_2 X.
\end{cases}
$$
\end{definition} 
This definition has recently been exploited and simplified to some extent in \cite{B}. The main thing we are interested in is the following result that does not require in fact any definability assumption (the proof given in \cite{BD} works for any set $X$ as below).
\begin{theorem}[\cite{BD}, Theorem 4.35]\label{radius}
  For any closed, nonempty proper subset $X$ of $\mathbb{R}^n$,  $r^{-1}(0)=\overline{M_X}\cap X.$
\end{theorem}

Our first main result is the relation between the general curvature and the directional curvatures. 

\begin{theorem}\label{theta}
    $\theta(X,0)=\min\{\theta_v(X,0)\mid v\in \{0\}^k\times\mathbb{R}^{n-k}\cap \mathbb{S}^{n-1}\}.$
\end{theorem}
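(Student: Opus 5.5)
The plan is to compare the two suprema directly, using the reformulation of $\theta_v(X,0)$ given by Lemma \ref{tangentcomponent}. Write points of $\mathbb{R}^n$ as $(x,y)\in\mathbb{R}^k\times\mathbb{R}^{n-k}$, so that $\pi(x,y)=x$. For any camber direction $v\in\{0\}^k\times\mathbb{R}^{n-k}\cap\mathbb{S}^{n-1}$ we have $\langle v,(x,y)\rangle=\langle v,(0,y)\rangle$. By Lemma \ref{tangentcomponent},
$$\theta_v(X,0)=\sup\{\theta>0\mid |\langle v,(x,y)\rangle|\leqslant \mathrm{const.}\|x\|^\theta \text{ on } (X,0)\}.$$
I will show that $\theta(X,0)=\inf_v\theta_v(X,0)$, and that this infimum is realised by some $v$, so it is a minimum.

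\textbf{Step 1: $\theta(X,0)\leqslant\theta_v(X,0)$ for every $v$.} Let $\theta<\theta(X,0)$, or $\theta=\theta(X,0)$ when it is finite, since it is attained by the lemma after Definition \ref{gc}. Then $\|y\|\leqslant C\|x\|^\theta$ on $(X,0)$. By Cauchy--Schwarz, $|\langle v,(x,y)\rangle|\leqslant\|y\|\leqslant C\|x\|^\theta$. Hence $\theta$ is admissible in the supremum defining $\theta_v$, and taking the supremum over $\theta$ gives $\theta(X,0)\leqslant\theta_v(X,0)$. Therefore $\theta(X,0)\leqslant\inf_v\theta_v(X,0)$.

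\textbf{Step 2: a finite family of directions controls $\theta(X,0)$.} Fix the orthonormal basis $e_{k+1},\dots,e_n$ of $\{0\}^k\times\mathbb{R}^{n-k}$; each $e_j$ is a camber direction. Let $\theta_0:=\min_j\theta_{e_j}(X,0)$. If $\theta_0=+\infty$, then every $e_j$-coordinate of $y$ vanishes on the germ, because Lemma \ref{skoncz} forces $(X,0)\subset(\mathbb{R}e_j)^\perp$ for each $j$. Thus $(X,0)\subset\mathbb{R}^k\times\{0\}$ and all quantities equal $+\infty$.

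Otherwise, take any finite $\theta<\theta_{e_j}$ for all $j$; one may even take $\theta=\theta_0$ by Remark \ref{osiaganie}, since each finite exponent is attained. For each $j$ we get $|y_j|\leqslant C_j\|x\|^{\theta}$, using that $\|x\|\leqslant 1$ near $0$, so a larger exponent only improves the bound. Summing gives
$$\|y\|\leqslant\sum_j|y_j|\leqslant \Big(\sum_j C_j\Big)\|x\|^{\theta}$$
on $(X,0)$. Hence $\theta(X,0)\geqslant\theta_0$.

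\textbf{Step 3: conclusion.} Combining the two steps,
$$\theta_0\leqslant\theta(X,0)\leqslant\inf_v\theta_v(X,0)\leqslant\min_j\theta_{e_j}(X,0)=\theta_0 .$$
So all these quantities coincide, and the infimum over the whole sphere $\{0\}^k\times\mathbb{R}^{n-k}\cap\mathbb{S}^{n-1}$ is attained at the basis vector $e_j$ realising $\theta_0$. This justifies writing $\min$ in the statement.

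The only delicate point is this attainment of the infimum over an infinite (compact) set of directions. The finite-basis trick in Step 2 avoids any compactness or uniformity argument in $v$. The bookkeeping of the constants near the origin, where the smaller exponent dominates, is routine.
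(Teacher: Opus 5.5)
Your proof is correct, and it differs from the paper's in how the finite basis is used. The paper fixes a basis $v_1,\dots$ of $\mathrm{Aff}(C_0X)^\perp$ and argues by contradiction that an arbitrary camber direction $v=\sum\lambda_i v_i$ satisfies $\theta_v(X,0)\geqslant\min_i\theta_{v_i}(X,0)$. For this it uses linearity of $v\mapsto\langle v,x\rangle$, the triangle inequality and Lemma~\ref{tangentcomponent}. This shows that the infimum over the sphere is a minimum over the basis. The identification of that minimum with $\theta(X,0)$ is left tacit in the paper; it amounts to $\|y\|$ being comparable to $\max_i|\langle v_i,x\rangle|$.

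You instead use $\theta(X,0)$ itself as the pivot. Cauchy--Schwarz gives $\theta(X,0)\leqslant\theta_v(X,0)$ for every unit $v$ at once, without expanding $v$ in a basis. Summing the coordinate bounds gives $\theta(X,0)\geqslant\min_j\theta_{e_j}(X,0)$, and the sandwich yields both the equality and the attainment of the minimum.

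Your version is direct rather than by contradiction, and it actually proves the link with $\theta(X,0)$ that the paper's written argument skips. The paper's version shows more intrinsically that the directional curvatures along any spanning family bound all the others from below. Your Step 2 adapts to a non-orthonormal basis via equivalence of norms, so nothing is lost.

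A few of your appeals are also dispensable. Step 1 does not need Lemma~\ref{tangentcomponent}, since $\|x\|\leqslant\|z\|$. The appeals to attainment (Remark~\ref{osiaganie}) and to Lemma~\ref{skoncz} in Step 2 can be dropped by working with arbitrary finite $\theta<\theta_0$, which also covers the case $\theta_0=+\infty$.
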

\begin{proof}
    Let $v_1, \ldots ,v_k\in\mathbb{S}$ span the $\mathrm{Aff}(C_0X)^\perp$. We immediately obtain \[\inf \{\theta_v(X,0) \mid v\in  \mathrm{Aff}(C_0X)^\perp \cap\mathbb{S}\}\leqslant \min_{i=1,\ldots, k}\{\theta_{v_i}(X,0)\}.\] 
    
    Conversely, we take any $v\in \mathrm{Aff}(C_0X)^\perp \cap \mathbb{S}$ and assume that $\min_{i=1,\ldots, k}\{\theta_{v_i}(X,0)\}$ $>\theta_v(X,0)$. Then there exists $\delta>0$ such that $\theta_{v_i}(X,0)>\delta>\theta_v(X,0)$ for any $i=1,\ldots,k$. 
    
    Let us write $v=\sum \lambda_iv_i$, take a decomposition of $\mathrm{Aff}(C_0X)^\perp = U\oplus \mathbb{R}v$ and denote $x\in X$ as $x=x_1\oplus x_2\oplus tv\in \mathrm{Aff}(C_0X)\oplus U \oplus \mathbb{R}v$. Since $\theta_{v_i}(X,0)>\delta$, we can find constants $c_i$ for which $|t |\leqslant c_i\|x_1\|^\delta$ by Lemma~\ref{tangentcomponent}. 
    
    We are now ready to bound the norm of $z=x_1\oplus x_2$ as follows
%
%
    
    \begin{align*}
    \|z\|^\delta=\mathrm{const.}\sum \lambda_i c_i\|z\|^\delta &\geqslant \mathrm{const.}\sum \lambda_i c_i\|x_1\|^\delta\geqslant\\
    &\geqslant \mathrm{const.}\sum \lambda_i \|tv_i\| \geqslant \mathrm{const.}\|\sum \lambda_i tv_i\|=\mathrm{const.}\|tv\|.
    \end{align*} 
 Thus, we obtain a contradiction with the choice of $\theta_v(X,0)$.
\end{proof}

\section{Plane case and $\theta\geqslant 2$}\label{plane geq}
Let us recall that we assumed the closed, definable planar germ $(X,0)\subsetneq (\mathbb{R}^2,0)$ to satisfy two conditions:
\begin{enumerate}
    \item[$(\ast)$] the affine hull of the Peano tangent cone of $X$ at zero is of the form $$\mathrm{Aff}(C_0X)= \mathbb{R}^k\times \{0\}^{n-k},$$
    \item[$(\ast\ast)$] the germ $X$ is horizontally filled.

\end{enumerate}

Let us put $\mathbb{R}_\pm := \{x\in \mathbb{R}\mid \pm x\geqslant 0\}$ and for any set $X\subset \mathbb{R}^2$ let $X^\pm:=X\cap \{(x,y)\in\mathbb{R}^2\mid \pm x\geqslant 0\}$.

We shall use a notion of the {\it central set} $C_X$ of $X$. It is the set of all the centres of {\it maximal balls} contained in $\mathbb{R}^n\setminus X$, i.e. such open balls $B\subset \mathbb{R}^n\setminus X$ that are not proper subsets of any other open ball $B'\subset \mathbb{R}^n\setminus X$. The notion of the central set is closely related to that of the medial axis. Indeed, a sequence of inclusions holds: $M_X\subset C_X\subset \overline{M_X}$ (see e.g. \cite{BD}). The elements of $C_X$ are conveniently detected by the directional reaching radius. Indeed, one can easily verify that for any $a\in X$ and $v\in V_a(X)$ with finite directional reaching radius, there is $a+r_v(a)(v-a)\in C_X.$

Recall also that a definable, connected set is pathwise connected.

\begin{lemma}\label{wnetrze} Let $X\subset \mathbb{R}^2$ be a connected, definable set with a nonempty medial axis $M_X$. Let $p\in M_X$ be such that $m(p)$ is not connected and let $a, b$ be points in two different components of $m(p)$. Suppose there is a path joining $a$ and $b$ in $X$ that lies entirely inside the sector defined by the acute angle between the segments  $[p,a], [p,b]$. Then we define $S\subset \mathbb{R}^2$ to be the corresponding closed area limited by segments $[p,a], [p,b]$ and by $\partial X$. Then $M_X\cap \mathrm{int} S\neq \varnothing$.\end{lemma}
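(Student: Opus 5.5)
We argue by contradiction and exploit the fact that the medial axis is "trapped" in $S$. Assume $M_X\cap\mathrm{int}\,S=\varnothing$. Since the open disc $D:=B(p,d(p,X))$ misses $X$, the points $a,b$ lie on $\partial D$. The path $\gamma\subset X$ joining $a$ and $b$ stays in the acute sector, so the region $S$ bounded by $[p,a]$, $[p,b]$ and $\gamma$ (or by $\partial X$) contains the circular sector of $D$ between $a$ and $b$. It also contains points of $\mathbb{R}^2\setminus X$ arbitrarily close to $p$ on the bisector of the angle $apb$.

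\textbf{Step 1: the nearest point of $X$ is continuous on $\mathrm{int}\,S\setminus X$.} If $M_X\cap \mathrm{int}\,S=\varnothing$, then every point $q\in \mathrm{int}\,S\setminus X$ has a unique closest point $m(q)$. The map $q\mapsto m(q)$ is then continuous there; this is the standard upper semicontinuity of $m$ together with uniqueness.

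\textbf{Step 2: two disjoint nonempty open pieces.} Take $q_t:=p+t(w-p)$ with $w$ on the open arc of $\partial D$ between $a$ and $b$, so $q_t\in\mathrm{int}\,S$ for small $t>0$. As $q\to p$ inside $\mathrm{int}\,S$, the limit points of $m(q)$ lie in $m(p)$. Hence for $q$ near $p$, $m(q)$ lies close to either the component $A\ni a$ or the complementary components $B'\ni b$ of $m(p)$, which are separated by a positive distance. The points $q$ close to $p$ on the ray $(p,a)$ have $m(q)=a$, and similarly on the ray $(p,b)$; this follows from the fact that $a\in m(p)$ implies $m(q)=\{a\}$ for $q\in(p,a)$. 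Thus the connected set $(\mathrm{int}\,S\setminus X)\cap B(p,\delta)$ splits into two disjoint nonempty open pieces $\{q: m(q) \text{ near } A\}$ and $\{q: m(q)\text{ near }B'\}$. The smallness of $\delta$ keeps $m(q)$ in a small neighbourhood of $m(p)$.

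\textbf{Step 3: connectedness of the local piece.} This contradicts connectedness, provided $(\mathrm{int}\,S\setminus X)\cap B(p,\delta)$ is connected. It contains the connected set $\mathrm{int}\,S\cap B(p,\delta)\cap D$, which is a convex sector slice. Any other point of $\mathrm{int}\,S\setminus X$ near $p$ can be joined to $D$ along the segment toward $p$, since $[q,p)$ stays in the complement.

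\textbf{Main obstacle.} The hard step is this connectedness claim: showing that the segment from $q$ toward $p$ does not meet $X$ inside $S$. For this I would instead run the argument with $q$ restricted to $D\cap \mathrm{int}\,S$ directly. On the convex set $D\cap\mathrm{int}\,S\cap B(p,\delta)$, continuity of $m$ and the two boundary rays already give the two-piece contradiction. The remaining care is to check that points of the rays $(p,a)$, $(p,b)$ are limits of interior points with the same nearest-point behaviour, and that the definability of $X$ ensures $m(p)$ has finitely many components. The latter makes "$A$" and "$B'$" well separated.
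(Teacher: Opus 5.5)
Your argument is correct once you run it as in your last paragraph, and it is a genuinely different proof from the paper's.

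The ``main obstacle'' in fact disappears. For $\delta<d(p,X)$ the ball $B(p,\delta)$ lies in $D$ and misses $X$. So near $p$ the set $S$ is just the closed acute sector, and $(\mathrm{int}\,S\setminus X)\cap B(p,\delta)$ is the open convex sector slice $W$ between the rays $pa$ and $pb$. The justification via segments $[q,p)$ in Step 3 can therefore be dropped. Likewise, you do not need finitely many components of $m(p)$: compactness of $m(p)$ already gives a clopen splitting $A\sqcup B'$ with $a\in A$, $b\in B'$, and hence disjoint open neighbourhoods $U_A,U_{B'}$. The pieces $\{q\in W\mid m(q)\subset U_A\}$ and $\{q\in W\mid m(q)\subset U_{B'}\}$ are open by upper semicontinuity. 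They are nonempty by upper semicontinuity at points of $(p,a)$ and $(p,b)$, which lie in $\overline{W}$ and where $m$ equals $\{a\}$, resp.\ $\{b\}$.

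The paper instead works globally in $S$:
\begin{enumerate}
\item It takes a disc inside $S$ touching $X$ at a single point $q$.
\item It follows the normal ray from $q$ through the centre until the ray first leaves $S$ at a point $y$. This $y$ lies on $\partial X$, or on a segment where the nearest point is $a$ or $b$.
\item It observes $q\notin m(y)$, so the directional reaching radius at $q$ is finite and the corresponding point of the central set $C_X$ lies on $(q,y)\subset\mathrm{int}\,S$.
\item It concludes from $C_X\subset\overline{M_X}$.
\end{enumerate}
That route genuinely uses the path in $X$, to force the ray to exit through $\partial X$ or the segments. It also relies on the reaching-radius and central-set machinery, and on definability to pick the first exit point. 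It gives no control on where in $S$ the medial axis point sits.

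Your route uses only upper semicontinuity of $m$ and connectedness, and needs neither the path nor definability. It yields the stronger local statement $M_X\cap\mathrm{int}\,S\cap B(p,\delta)\neq\varnothing$ for every $\delta>0$. This fits the local way the lemma is quoted in the next theorem ($(S_n\cap V)\cap M_X\neq\varnothing$ for a neighbourhood $V$ of $p_n$).
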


\begin{proof} 
    Let us consider any closed disk inside $S$, centered at a point $o\notin M_X$ and intersecting $X$ at the unique closest point that we want to be $q\in S\cap X$. Such a point clearly exists. Then the half-line $\mathbb{R}_{+}(o-q)$ with the origin at $q$ must intersect $C_X$. Indeed, the directional reaching radius $r_{v}(q)$ in the normalized direction given by $o-q$ exceeds $\|o-q\|$. Then, due to definability, the half-line intersects $\partial S$ at a finite number of points. Denote by $y$ the point of the intersection that is closest to $q$. Then either $y\in \partial X\setminus\{q\}$, or $m(y)\in\{a,b\}$. In either case $q$ is not the closest point to $y$. Therefore, in the open segment $(q,y)$ there must exist an element of $C_X$, which belongs to $\mathrm{int} S$. But arbitrarily near this point we can find an element of $M_X$ and this ends the proof.
\end{proof}

\begin{figure}[h]
\includegraphics[width=6cm]{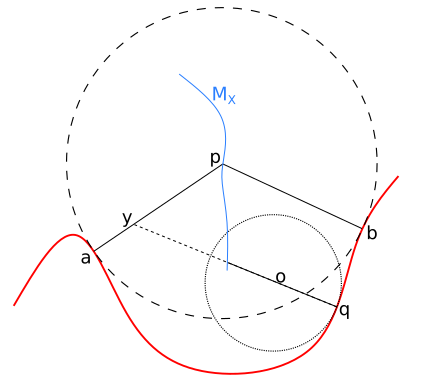}
\centering
\end{figure}

\begin{remark} If $(X,0)\subset (\mathbb{R}^2,0)$ is not superquadratic, then for some neighborhood $V\ni 0$, there is $V\cap M_X= \varnothing$ (e.g., in the $\mathscr{C}^2$-smooth case, one can take $V=B(0,\norm{f}/2)$, where $f$ is a focal point).\end{remark} 

We are assuming now $X$ connected and satisfying $(*)$ and  $(**)$. Notice that in such a case, the germ of $\partial X$ is formed by at most two planar curves.  

\begin{theorem} If $0\in \overline{(M_{X^+})}$, then there exists a neighbourhood $V\ni 0$ such that  $$\forall x\in M_{X^+}\cap V, \  0\in m(x).$$\end{theorem}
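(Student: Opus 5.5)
We argue by contradiction, using the planar structure of $\partial X^+$ together with Lemma \ref{wnetrze}. By $(*)$ and $(**)$ and connectedness, near $0$ the set $X^+$ lies in a cusp-like region $\{|y|\leqslant C|x|\}$ with $x\geqslant 0$. Its boundary germ in $\{x>0\}$ consists of at most two definable arcs $\Gamma_{up}$ and $\Gamma_{down}$ emanating from $0$, which we parametrise as graphs $y=f_\pm(x)$, $x\in[0,\varepsilon)$. Horizontal filling guarantees that $X^+$ contains the region between them. The complement of $X^+$ near $0$ therefore has at most three parts: the region above $\Gamma_{up}$, the region below $\Gamma_{down}$, and (for $X^+$, since we cut along $x=0$) the half-plane $\{x<0\}$. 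The point $0$ is the only point where the boundary of $X^+$ meets the cut line $\{x=0\}$, up to the horizontal part of $X^+$ lying on that line.

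Suppose the claim fails. Then there are $p_\nu\in M_{X^+}$ with $p_\nu\to 0$ and $0\notin m(p_\nu)$. Since $d(p_\nu,X^+)\leqslant\|p_\nu\|\to 0$, all points of $m(p_\nu)$ tend to $0$. By definability (curve selection applied to the definable set $\{p\in M_{X^+}\mid 0\notin m(p)\}$), we may take $p_\nu$ on a definable arc $p(s)\to 0$, and choose definable selections $a(s)\neq b(s)$ in $m(p(s))$, lying in different components of $m(p(s))$ and distinct from $0$.

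We first locate the arc. If $p(s)$ stays in $\{x<0\}$ and is tangent to the direction $(-1,0)$, then its nearest points lie on the vertical-ish end of $X^+$ near $0$, and a direct estimate shows $0\in m(p(s))$. So we may assume $p(s)$ approaches inside the region above $\Gamma_{up}$, or its closest points lie on one arc, say $\Gamma_{up}$. There are two cases.
\begin{enumerate}
\item[(a)] Both $a(s),b(s)\in\Gamma_{up}\setminus\{0\}$. The subarc of $\Gamma_{up}$ between them is a path in $X^+$, and for small $s$ it lies inside the acute sector $\angle a(s)p(s)b(s)$, because the arc is nearly straight compared with the distance $\|p(s)-a(s)\|$. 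Lemma \ref{wnetrze} then produces medial points in the interior of the region $S(s)$ bounded by $[p,a]$, $[p,b]$ and the subarc. Iterating this, i.e. taking the innermost such configuration and using finiteness of components, which follows from definability, yields a contradiction: on a subarc with no medial axis in front of it, the region can contain no interior medial points.
\item[(b)] $a(s)\in\Gamma_{up}$ and $b(s)\in\Gamma_{down}$, or $b(s)$ lies on the part of $X^+$ in $\{x=0\}$. Here the path joining $a$ and $b$ in $X^+$ that stays near $p$ must pass through a neighbourhood of $0$. Since $p(s)$ must see both arcs, it lies in $\{x<0\}$ close to the axis, and the disk $B(p(s),d)$ with $d=d(p(s),X^+)$ would contain $0$ unless $0$ is equidistant. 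A comparison using $|f_\pm(x)|\leqslant C x$ shows that $\|p(s)\|\leqslant d$, hence $0\in m(p(s))$, contradicting the assumption.
\end{enumerate}

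The main obstacle is case (a). We must show that medial points whose nearest points lie on a single arc $\Gamma_{up}$ cannot accumulate at $0$ without $0$ being a nearest point, and Lemma \ref{wnetrze} alone only gives more medial points, not a contradiction. For this I would first try an ordering argument. Parametrise by arclength along $\Gamma_{up}$ and note that normals from distinct points of $\Gamma_{up}$ to the convex side meet. By definability, the curvature of $\Gamma_{up}$ is eventually monotone, and this monotonicity implies that the nearest-point map from the concave side is single-valued near $0$, away from the projection onto the endpoint $0$. As a result, the only way to have $\#m(p)>1$ close to $0$ is for $m(p)$ to contain the endpoint $0$.
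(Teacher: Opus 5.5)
\textbf{Verdict: there is a genuine gap.} As you note yourself, case (a) carries the whole content of the statement: two nearest points $a\neq b$ on the same boundary arc, both different from $0$. Your proposal does not prove it.

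The first suggestion, iterating Lemma \ref{wnetrze} and passing to an ``innermost'' configuration, cannot close the case. The lemma only produces more medial points inside $S$, and such regions can be nested indefinitely along a single branch. For $X=\{y\leqslant x^2\}$, each medial point $(0,t)$ with $t>1/2$ gives a region $S$ containing the medial points $(0,t')$, $1/2<t'<t$. The regions shrink to the focal point, there is no innermost one, and no finiteness is violated.

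The second suggestion only asserts that eventual monotonicity of the curvature makes the nearest-point map to the open arc single-valued near $0$. That is exactly what has to be shown. It can be made rigorous, but with an argument you do not give:
\begin{itemize}
\item By o-minimality, $\Gamma_{up}\setminus\{0\}$ is $\mathscr{C}^3$ near $0$, and its curvature has constant sign and is either constant or strictly monotone. The constant case is a circular arc whose centre stays away from $0$.
\item If the arc bends away from $p$, then $p-a=d\,n(s_1)$ and $p-b=d\,n(s_2)$ give $b-a=d\,(n(s_1)-n(s_2))$. The left side has positive $x$-component. The right side has non-positive $x$-component, because the tangent angle is non-increasing and small near $0$.
\item If the arc bends towards $p$, tangency at $a$ and $b$ with the arc lying outside $B(p,d)$ forces curvature $\leqslant 1/d$ at both points. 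Hence $\overline{B(p,d)}$ lies in both closed osculating disks, which contradicts the strict nesting of osculating circles (Tait--Kneser theorem).
\end{itemize}
Your remark that normals to the convex side meet plays no role in any of this.

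\textbf{Other steps.} These are loose but repairable.
\begin{itemize}
\item In case (b), the claims ``$p(s)$ must see both arcs, hence lies in $\{x<0\}$'' and ``a comparison using $|f_\pm(x)|\leqslant Cx$ gives $\|p(s)\|\leqslant d$'' are not arguments. In fact (b) never occurs. The open chord $(a,b)$ lies in $B(p,d)$, which misses $X^+$. Yet the chord leaves $a$ above $\Gamma_{up}$, reaches $b$ from below $\Gamma_{down}$, and stays over $\{x>0\}$. By the intermediate value theorem it therefore meets $\Gamma_{up}\subset X^+$, a contradiction.
\item By $(*)$, $X^+\cap\{x=0\}=\{0\}$ near $0$, so there is no ``vertical-ish end'' to treat.
\end{itemize}

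\textbf{How the paper proceeds.} The paper avoids local differential geometry altogether. It chooses $p_n\to 0$ with $0\notin m(p_n)$ so that the circles $C(p_n,d(p_n,X))$ are pairwise disjoint and the sets $m(p_n)$ are ordered along $\partial X^+$. This makes the regions $S_n$ of Lemma \ref{wnetrze} pairwise disjoint, and each contains medial points. The definable set $M_{X^+}$ is a finite union of curves without loops, since $\mathbb{R}^2\setminus X^+$ is simply connected. So each curve meets at most two of the $S_n$, a contradiction.

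This is the idea missing from your case (a): the contradiction comes from infinitely many \emph{disjoint} regions set against finiteness, not from descending inside a single region.
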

\begin{proof}
    Observe first that since $X$ is definable, so are $X^+$ and $M_{X^+}$.
    
    Let us suppose by contradiction that there exists a sequence $\{p_n\}$ of points in $(M_{X^+})^+$ such that $\norm{p_n}\to 0$ but with $0\notin m(p_n)$.
    \par For all of those points consider circles $C_n:=C(p_n, d(p_n, X))$. Since $0$ lies outside of them all (as it is not the closest point to any $p_n$), 
    by passing to a subsequence we can ensure that whenever $i\neq j$, we have $C_i\cap C_j=\varnothing$. Indeed, if we chose pairwise disjoint circles $C_1,\ldots, C_k$, then we can choose a neighbourhood $V:=B(0,2\varepsilon)$ of $0$ disjoint with them, as $0$ lies outside of all the circles. We can now choose $C_{k+1}$ as a circle with center at any of  $p_{k+1}\in B(0,\varepsilon)$. Notice that by a slight movement of the points $p_n$ we can also assume that $C_n\cap X^+ = m(p_n)$ are not connected spaces. Indeed, proceeding iteratively there can only be two situations:
    \vspace{10pt}\par (1) $C_n\cap X$ is not connected;
    \par Then the claim holds without further adjustments of $p_n$;
    %
    \vspace{10pt}\par (2) $C_n\cap X$ is connected;
    \par Assume that $n$ is the first index for which $C_n\cap X$ is connected. Then let $a,b$ be end points of the arc $C_n\cap X^+$ (since $X^+$ is horizontally filled $C_n\cap X^+$ cannot be the whole circle). Consider $C_n'$ defined as a circle passing through $a,b$ centered at point $p_n'$ with $\norm{p_n-p_n'}<\varepsilon_0$. Moreover, let $C_n''$ be a circle concentric with $C_n'$ and with radius $d(p_n', X^+)+\varepsilon$, where $\varepsilon>0$ is chosen so that for the disk $D''_n$ having $\partial D_n''=C_n''$, we get $D_n''\cap X^+$ disconnected (this is possible for $\varepsilon$ small enough).
    If we choose $\varepsilon_0$ and $\varepsilon$ small enough, then $C_n''$ is still disjoint with the other circles $C_k$. Since $D_n''\cap X$ has at least $2$ components, a point $q\in M_{X^+}$ exists inside $D_n''$ such that $C(q,d(q,X^+))\subset D_n''$, so we can replace $p_n$ with $q$ to obtain the previous case.
    

    \begin{figure}[h]
    \includegraphics[width=6cm]{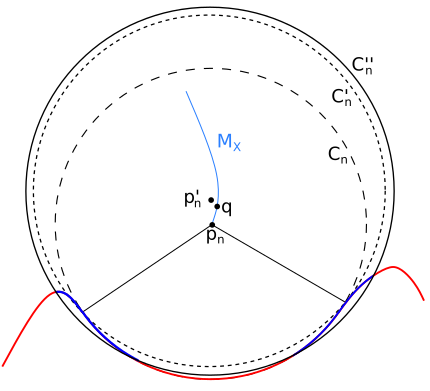}
    \centering
    \end{figure}
    
    \vspace{10pt}\par Having $C_n\cap X$ disconnected we can choose two points $a,b\in C_n\cap X$ that belong to different connected components of this set. Then we consider the open subset $S_n\subset \mathbb{R}^2$ limited by $\partial X$ and the segments $[p_n,a], [p_n,b]$ (it is well defined, because $\partial X$ is connected). According to Lemma \ref{wnetrze} there exists a neighbourhood $V\ni p_n$, such that $(S_n\cap V)\cap M_X\neq \varnothing$.
    

    Passing once more to a subsequence, we can also assume that for indices $i<j$, all points in $m(p_i)$ appear before those of $m(p_j)$ on the curve that forms $\partial X^+$. In such a case, the regions $S_n$ are pairwise disjoint.
    
    We are now getting close to the final contradiction. On the one hand, we know that $M_{X^+}$ is a boundary set formed by a finite family of curves. On the other hand, every region $S_n$ contains a distinct branch of $M_X$. However, since $\mathbb{R}^2\setminus X^+$ is simply connected, $M_{X^+}$ does not contain any loops (see \cite{Lieu}) and any curve of the initial family of curves that build $M_X$ can intersect with up to two different regions $S_n$ --- a contradiction with definability of $M_X$ since the number of curves that form $M_X$ would have to be infinite.
\end{proof}

\begin{figure}[h]
\includegraphics[width=12cm]{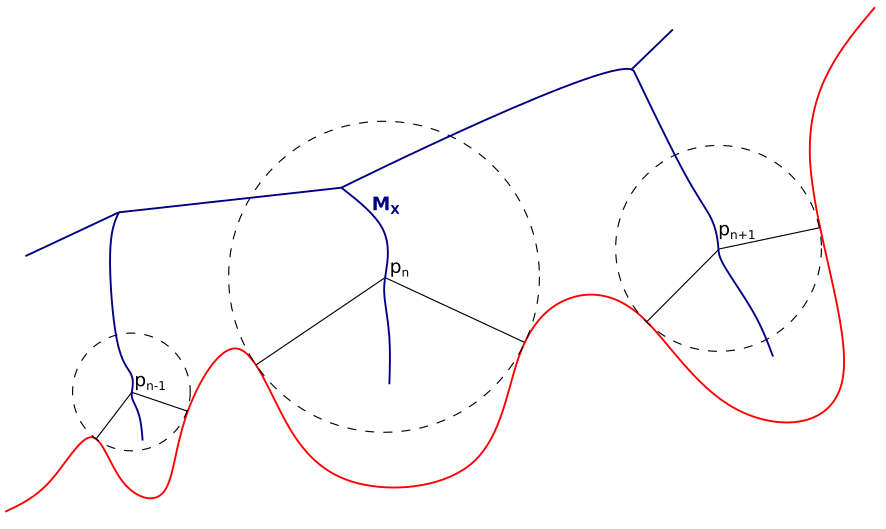}
\centering
\end{figure}

\begin{remark}If $X$ is not superquadratic and satisfies conditions $(\ast)$ and $(\ast \ast)$, then there exists a neighbourhood $V\ni 0$, such that for any $x\in M_X\cap V$ one has $m(x)\subset X^+$ or $m(x)\subset X^-$. Together with the previous result this remark leads to the following theorem. \end{remark}

\begin{theorem}\label{plane geq 2} For a definable $X$ satisfying $(\ast)$ and $(\ast \ast)$, $\theta(0,X)\geqslant 2$ implies $0\notin \overline{M_X}$.\end{theorem}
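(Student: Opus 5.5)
We argue by contradiction. Suppose $\theta(X,0)\geqslant 2$ and yet $0\in\overline{M_X}$, so that there is a sequence $p_\nu\in M_X$ with $p_\nu\to 0$. The plan has three steps: localise the closest points $m(p_\nu)$ on one side of the vertical axis, show they must contain the origin, and then use the quadratic flatness of $X$ to rule this out.

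\textbf{Step 1: one side.} Since $X$ is not superquadratic and satisfies $(\ast)$ and $(\ast\ast)$, the remark preceding the theorem gives a neighbourhood $V$ of $0$ such that, for $p_\nu\in V$, either $m(p_\nu)\subset X^+$ or $m(p_\nu)\subset X^-$. Passing to a subsequence, assume $m(p_\nu)\subset X^+$ for all $\nu$. Then $d(p_\nu,X^+)=d(p_\nu,X)$ and $m_{X^+}(p_\nu)=m(p_\nu)$ has at least two points, so $p_\nu\in M_{X^+}$ and $0\in\overline{M_{X^+}}$.

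\textbf{Step 2: the origin is a closest point.} The previous theorem now applies: for $\nu$ large, $0\in m_{X^+}(p_\nu)=m(p_\nu)$. Because $p_\nu\in M_X$, the set $m(p_\nu)$ also contains a point $q_\nu\neq 0$ with $\|q_\nu-p_\nu\|=\|p_\nu\|$. By horizontal filling and $(\ast)$, $p_\nu$ lies in the complementary region above or below $X$ near the vertical axis; say above, so $p_\nu=(s_\nu,t_\nu)$ with $t_\nu>0$.

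\textbf{Step 3: quadratic bound.} Write $\rho_\nu=\|p_\nu\|\to 0$. The open disk $B(p_\nu,\rho_\nu)$ misses $X$ and has $0$ on its boundary, so $p_\nu/\rho_\nu$ is a unit normal vector at $0$ to a disk avoiding $X$. Since $\mathrm{Aff}(C_0X)=\mathbb{R}\times\{0\}$, this forces $p_\nu/\rho_\nu\to(0,1)$, i.e.\ $s_\nu=o(\rho_\nu)$ and $t_\nu\sim\rho_\nu$. Next write $q_\nu=(x_\nu,y_\nu)$ with $q_\nu\neq 0$. From $\theta(X,0)\geqslant 2$ together with attainment in $\mathbb{F}$ we get $|y_\nu|\leqslant c\,x_\nu^2$, and $(\ast)$ gives $q_\nu\to 0$. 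The equation $\|q_\nu-p_\nu\|^2=\|p_\nu\|^2$ reads
\[x_\nu^2+y_\nu^2=2s_\nu x_\nu+2t_\nu y_\nu .\]
The right-hand side is at most $2|s_\nu||x_\nu|+2c\,t_\nu x_\nu^2$. Since $t_\nu\to 0$, we have $2c\,t_\nu<1/2$ for large $\nu$, and so $x_\nu^2\leqslant 4|s_\nu||x_\nu|$, that is, $|x_\nu|\leqslant 4|s_\nu|$.

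\textbf{Main obstacle.} The estimate of Step 3 alone yields no contradiction: it only shows that $q_\nu$ is much closer to the axis than $p_\nu$ is deep. What is still needed is to play the two closest points $0$ and $q_\nu$ against each other, using that $q_\nu$ lies on $\partial X^+$ strictly to the right of $0$ while $X$ is squeezed into $\{|y|\leqslant cx^2\}$. I would first apply the reaching-radius characterisation (Theorem~\ref{radius}) to the graph-like boundary curve $\partial X^+\cap\{y\geqslant0\}$. This curve satisfies $y\leqslant c x^2$ near $0$, which bounds its curvature from the viewpoint of the disk $B(p_\nu,\rho_\nu)$, so $r'$ stays bounded below near $0$ along it. In particular, a disk of radius $\rho_\nu\ll 1/(2c)$ tangent at $0$ cannot touch the curve again at $q_\nu$. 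Making this step rigorous for a merely definable, non-smooth boundary curve is where I expect the real work to lie. The natural tools are a Puiseux-type parametrisation $y=\phi(x)$ with $\phi(x)\leqslant cx^2$, together with $\phi\geqslant 0$ from the choice of side, and a comparison with the osculating parabola $y=cx^2$, whose medial axis avoids $B\bigl(0,1/(2c)\bigr)$.
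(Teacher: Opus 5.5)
\textbf{There is a genuine gap: your argument never reaches a contradiction.} After Step 2 you have $0\in m(p_\nu)$ and a second point $0\neq q_\nu\in m(p_\nu)$. The ``main obstacle'' paragraph then only sketches a plan (reaching radius of the boundary curve, Puiseux parametrisation, comparison with an osculating parabola) and does not carry it out. Step 3 looked inconclusive only because you discarded the decisive sign information when you bounded $2s_\nu x_\nu$ by $2|s_\nu||x_\nu|$.

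Since $0\in m(p_\nu)$, the inequality $\|q-p_\nu\|\geqslant\|p_\nu\|$ for $q\in X$ gives $2\langle q,p_\nu\rangle\leqslant\|q\|^2$. Hence $p_\nu\in N_0(X)$. Next, $q_\nu\neq 0$ together with $|y_\nu|\leqslant c\,x_\nu^2$ forces $x_\nu\neq 0$. Pass to a subsequence with, say, $x_\nu>0$. Then $q_\nu/\|q_\nu\|\to(1,0)$, so $(1,0)\in C_0X$ and therefore $s_\nu\leqslant 0$, i.e. $s_\nu x_\nu\leqslant 0$. Your own identity now gives
\[
x_\nu^2+y_\nu^2=2s_\nu x_\nu+2t_\nu y_\nu\leqslant 2c|t_\nu|\,x_\nu^2 ,
\]
that is, $(1-2c|t_\nu|)x_\nu^2+y_\nu^2\leqslant 0$. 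This is impossible for $x_\nu\neq 0$ once $2c|t_\nu|<1$.

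This is exactly the paper's last step. When both halves of $X$ are non-trivial, $N_0(X)=\{0\}\times\mathbb{R}$, so $0\in m(x)$ forces $x\in\{0\}\times\mathbb{R}$. For such $x$ near $0$, the disk $B(x,\|x\|)$ meets $\{|y|\leqslant cx^2\}$ only at the origin, so $m(x)=\{0\}$ and $x\notin M_X$. No curvature or reaching-radius analysis of a non-smooth boundary is needed; the step you flag as ``the real work'' is the elementary part.

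\textbf{Your Step 3 claim about $p_\nu/\rho_\nu$ is also not right as stated.} You assert that $\mathrm{Aff}(C_0X)=\mathbb{R}\times\{0\}$ ``forces $p_\nu/\rho_\nu\to(0,1)$''. In the plane, $C_0X$ is then either the whole horizontal line or a half-line.
\begin{enumerate}
\item If $C_0X$ is the whole line, you get exact verticality, $s_\nu=0$, which already closes the argument above.
\item If $C_0X$ is a half-line, $N_0(X)$ is a closed half-plane and $p_\nu/\rho_\nu$ need not approach $(0,\pm1)$ at all.
\end{enumerate}
What holds in every case is the sign condition $s_\nu x_\nu\leqslant 0$, and that is the ingredient your proof needs.

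Otherwise, Steps 1 and 2 follow the paper's structure. Note, though, that the paper does not prove the Remark you cite in Step 1 separately; it establishes it inside this very proof, via the same parabola computation.
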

\begin{proof}
    Since $\theta\geqslant 2$, then, in particular, $(X,0)$ is bounded by $P:= \{(x,\pm ax^2),\ x\in\mathbb{R}\}$ for some $a>0$. Thanks to the properties of a parabola there exists a neighbourhood $V\ni 0$, such that for $x\in V$ and any $r>0$ an implication holds
    $$
    B(x,r)\cap P^- \neq\varnothing\ \land\ B(x,r)\cap P^+ \neq\varnothing \qquad \implies \qquad 0\in B(x,r).
    $$
    Hence, if $x\in V$ and $m(x)\not\subset X^+$ and $m(x)\not\subset X^-$, then $0\in m(x)$. However, since $(X,0)^+$ and $(X,0)^-$ are nonempty, $0\in m(x)$ is possible only for $x\in \{0\}\times \mathbb{R}$. Finally, for $x\in V\cap \{0\}\times \mathbb{R}$ we have $\{0\}=m(x)$ when $V$ is small enough. 

    In summary, if there exists a sequence $\{p_n\}\subset M_X$ such that $p_n\to 0$, then by passing to a subsequence we can assume that $m(p_n)\subset X^+$ (or $X^-$) for all points. However, it is impossible by Theorem 3.3.
\end{proof}

\section{Plane case and $\theta<2$}\label{plane <}

In the previous section, we proved that sequences of points $x\in M_X$, such that $m(x)\subset X^+$ or $m(x)\subset X^-$, cannot converge to zero (the proof is independent of assumption $\theta \geqslant 2$). Therefore, points of a sequence in $M_X$ convergent to zero must have the closest points in both parts of $X$ simultaneously. Now, we will prove that if $\theta<2$ then $0\in \overline{M_X}$.

\begin{theorem}\label{plane  <2} For a definable $X$, $\theta< 2$ implies $0\in \overline{M_X}$.\end{theorem}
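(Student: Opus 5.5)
We set things up as in Section~\ref{plane geq}: $n=2$, $k=1$, the tangent cone lies in the $x$-axis and $\theta:=\theta(X,0)<2$. We may assume $(X,0)\not\subset\mathbb{R}\times\{0\}$, since otherwise $\theta=+\infty$. By the lemma following Definition~\ref{gc}, $\theta\in\mathbb{F}$, $\theta\geqslant 1$, and the supremum is attained. Because $\theta$ is the \L ojasiewicz exponent of $|y|$ with respect to $|x|$ on $X$, curve selection gives a definable arc $\gamma(s)=(s,\tau(s))\subset X$ (possibly after replacing $x$ by $-x$) with
$$
\tau(s)=c s^{\theta}+o(s^{\theta}),\qquad c\neq 0,\ s\to 0^+ .
$$
After the reflection $y\mapsto -y$ we take $c>0$. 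Since $\theta<2$, we have $\tau(s)/s^2\to+\infty$. This is the planar superquadratic situation, now without any smoothness of $X$ at $0$.

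The plan is to apply Theorem~\ref{radius}: it suffices to show $r(0)=0$, and for this it is enough that $r'(0)=0$. So we exhibit a normal direction at $0$ with arbitrarily small directional reaching radius. The natural candidate is the camber direction $v=(0,-1)\in V_0^C(X)\subset V_0(X)$ (Remark~\ref{jeden}), pointing away from the side where $\gamma$ bends. We need $r_v(0)=0$, i.e. $0\notin m(-tv)=m((0,t))$ for every small $t>0$. Note that the sign conventions must be chosen so that the ball centred at $(0,t)$ of radius $t$ is the one that $\gamma$ enters; in the notation above that centre is $(0,t)$. This point has to be checked carefully.

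For the key estimate, fix a small $t>0$. The point $\gamma(s)$ lies at distance less than $t$ from $(0,t)$ exactly when
$$
s^2+\tau(s)^2-2t\tau(s)<0,\quad\text{i.e.}\quad s^2+\tau(s)^2<2t\,\tau(s).
$$
With $\tau(s)\sim cs^{\theta}$ and $\theta<2$, pick $s=s(t)$ so that $cs^{\theta}\approx t$ fails but the inequality still holds. Concretely, choose $s$ with $s^{2}=o(t\,s^{\theta})$, which is possible because $s^{2-\theta}\to0$; taking $s=t^{\alpha}$ with $\alpha$ large enough does it, and then $\tau(s)^2\ll t\tau(s)$ as well. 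Hence $d((0,t),X)<t=\|(0,t)-0\|$, so $0\notin m((0,t))$ for all small $t$, giving $r_v(0)=0$. Therefore $r(0)\leqslant r'(0)=0$ and $0\in\overline{M_X}$ by Theorem~\ref{radius}.

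The main obstacle is the normal-direction bookkeeping. One must verify that the direction producing the contradiction lies in $V_0(X)$; it does, since every vertical unit vector is a camber direction by Remark~\ref{jeden}. One must also check that the definition of $r_v$ uses $a+tv$, so the sign of $v$ has to be chosen opposite to the reflection above. Alternatively, one can avoid Theorem~\ref{radius} and argue directly: the point $p_t=(0,t)$ satisfies $d(p_t,X)<t$, and one follows the segment from $0$ in direction $(0,1)$. The point $0$ is its own nearest point near the start, since $(0,t)\to 0$ and $0\in X$; by the central-set argument of Lemma~\ref{wnetrze} this segment meets $C_X\subset\overline{M_X}$ within distance $t$, and letting $t\to0$ gives the claim.
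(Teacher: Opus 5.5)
Your proof is correct once the sign bookkeeping is fixed. With $c>0$, take $v=(0,1)$ directly, so that $a+tv=(0,t)$ as Definition \ref{rr} requires. Your choice $v=(0,-1)$ together with ``$m(-tv)$'' does not match the definition, and $r_{(0,-1)}(0)$ need not vanish. Both vertical unit vectors lie in $V_0^C(X)\subset V_0(X)$, so nothing else changes. The key inequality $s^2+\tau(s)^2<2t\,\tau(s)$ is most cleanly obtained by fixing $t$ and letting $s\to 0^+$: since $\theta<2$, you have $s^2/\tau(s)+\tau(s)\to 0$. The arc itself is the selection from the proof of Lemma \ref{order}.

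Your route differs from the paper's proof of this theorem. The paper works by a direct planar construction:
it compares $X$ with the curve $y=\tilde c\,x^\theta$, $\tilde c<c$, and takes discs tangent to this curve at a point $p$ below $\partial X$. It picks the smallest such disc containing $0$ and shrinks it (first concentrically, then keeping a contact point) until it touches both $X^+$ and $X^-$. This produces an explicit centre $o'\in M_X$ with $o'\to 0$ as $p\to 0$.

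You instead prove $r_{(0,1)}(0)=0$ and invoke Theorem \ref{radius}. This is exactly the planar case of the paper's later proof of Theorem \ref{kierunek} and Corollary \ref{wniosek}, with your explicit computation replacing the citation of Lemma 3.17 of \cite{BD}. Your argument is shorter and, modulo Theorem \ref{radius}, fully rigorous. It needs neither horizontal filling nor connectedness, and together with Theorem \ref{kierunek1} it also yields $(0,1)\in C_0M_X$. The paper's construction avoids the reaching-radius machinery and exhibits medial axis points whose nearest points lie on both sides $X^\pm$, though its shrinking steps are only sketched.

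Drop your ``alternatively'' paragraph; it does not work as stated. The statement $r_{(0,1)}(0)=0$ means precisely that $0$ is not a nearest point of $(0,s)$ for any $s>0$. So there is no initial piece of the vertical segment from which the argument of Lemma \ref{wnetrze} could start. A direct argument must start from points $y_t\in m((0,t))\setminus\{0\}$ and show $r_{v(t)}(y_t)\to 0$, as in the proof of Theorem \ref{kierunek1}.
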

\begin{proof}
    
    Let us set $c>0$ to be the infimum of constants for which the germ of $X$ is a subset of $\{(x,y)\in\Rz^2\mid y=\mathrm{const.}x^\theta\}$\footnote{Notice here that although the constant $c$ is greater than zero due to the polynomial bound of the o-minimal structure, it might still be unattained. Consider for example $X$ equal to the graph of $y=x^{3/2}+x^2.$}. Pick now any $0<\tilde{c}<c$ and observe that arbitrary close to $0$ we can find a point $p$ on the curve $y=\tilde{c}\,x^\theta$ lying under $\partial X$.

    \par Let us consider now closed disks $B_r:=B(o_r,r)$ tangent to the right part of the graph of $y=\tilde{c}\, x^\theta$ at the point $p$. According to the properties of the curve $y= \tilde{c}\,x^\theta$ for $\theta<2$, we know that the centre of $B_r$, the point $o_r$, lies in the left halfplane for all radii $r$ such that $0\in B_r$. Hence, denoting by $B:=B(o,r')$ the disk with minimal radius among those with $0\in B_r$, there exists a neighbourhood $V$ of zero that $V\cap X^-\subset B$.
    \par Now, let us consider disk $B':=B(o, r'-\delta)$ concenter with $B$ having a bit smaller radius, such that both $B'\cap X^-$ and $B'\cap X^+$ are nonempty. Note that $B'\cap X$ has at least two connected components as $0\notin B'$. We can decrease the radius of $B'$ further to the moment of $B'':=B(o,r'-\delta')$, when one of the intersections $B''\cap X^\pm$ becomes contained in a circle $C(o,r'-\delta')$. Then we choose any point in that intersection and we make $B''$ smaller again, keeping the tangency in the chosen point, until the disk is tangent to the second part of $X$ as well. In the end, we get the disk $B'''=B(o',r'')\subset B(o,r')$ with $o'\in M_X$. 
    \par Passing with the point $p$ on the curve $y=c'\,x$ to $0$ brings $r'\to 0$ and, consequently, $o'\to 0$. Finally $0\in \overline{M_X}$, which ends the proof.
    \begin{figure}[h]
    \includegraphics[width=8cm]{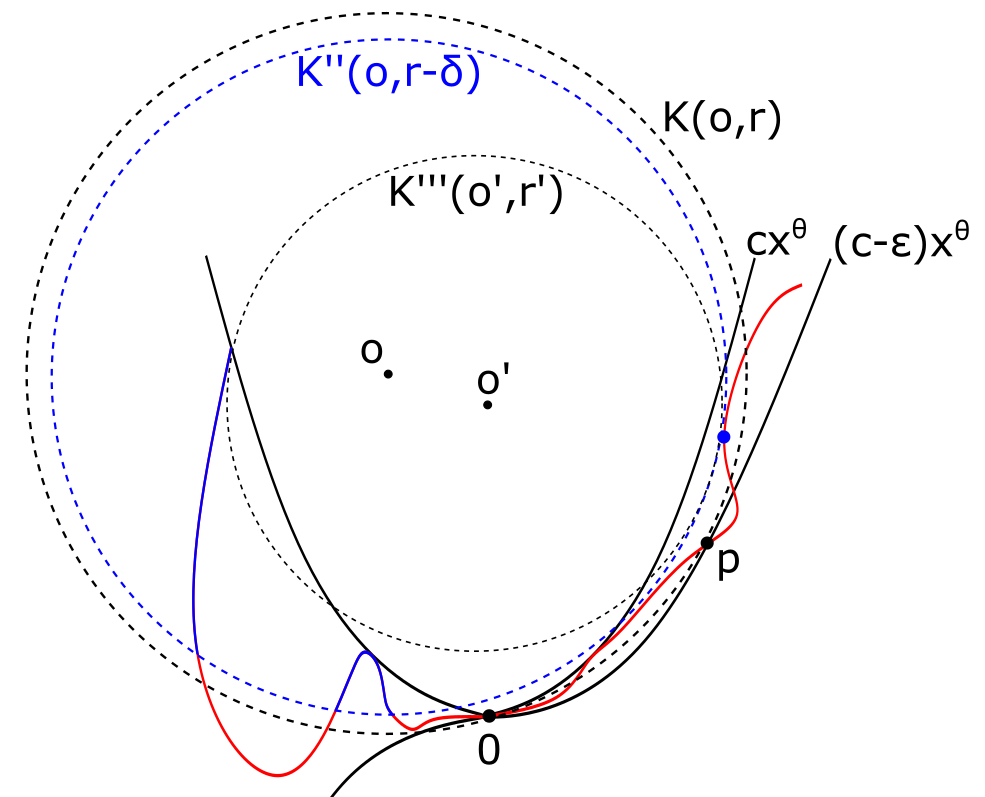}
    \centering
    \end{figure}
\end{proof}

\section{Higher dimensions and $\theta< 2$}

Similarly to \cite{BD} and \cite{Barx} we introduce the following `shape' function.
\begin{definition}
We define the \textit{shape function} of the germ $(X,0)$ in the direction $v\in V_0^C (X)$ by
$$
g_{X,v}(r) = \max \left\{ |t|\mid (z\oplus tv)\in X, \norm{z}=r\right\}
$$
where we decompose as earlier $\mathbb{R}^n=(\mathbb{R}v)^\bot\oplus \mathbb{R}v$.
\end{definition}
Of course, $g_{X,v}$ is a definable function and $(*)$ implies $g_{X,v}(0)=0$ (cf. Remark \ref{pierwsza}). Recall that $\mathrm{ord}_0 g_{X,v}$, whenever finite, is the exponent $\eta$ appearing in the expansion $g_{X,v}(r)=cr^\eta+o(r^\eta)$, where $c>0$ is a constant. It can be geometrically characterised by $\mathrm{ord}_0g_{X,v}=\sup\{\eta>0\mid g_{X,v}(r)\leqslant\mathrm{const.} r^\eta,\ \textrm{near}\ 0\}$ the least upper bound being attained.
\begin{lemma}\label{rzad}
In the context considered $\mathrm{ord}_0 g_{X,v} = \theta_v(X,0)$.
\end{lemma}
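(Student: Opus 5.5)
Both quantities are least upper bounds of admissible exponents for the same inequality, written once pointwise on $X$ and once for the maximum over spheres. So the plan is to show that the two sets of admissible exponents coincide, and then compare their suprema.

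First, dispose of the degenerate case $(X,0)\subset(\mathbb{R}v)^\bot$. Then $g_{X,v}\equiv 0$ near $0$, so $\mathrm{ord}_0 g_{X,v}=+\infty$. Likewise $\theta_v(X,0)=+\infty$, since every $P_v(\theta,c)$ contains $(\mathbb{R}v)^\bot$. From now on assume $(X,0)\not\subset(\mathbb{R}v)^\bot$. By Lemma \ref{skoncz} and Remark \ref{osiaganie}, $\theta_v(X,0)$ is then finite and attained.

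\textbf{Inequality $\theta_v(X,0)\leqslant \mathrm{ord}_0 g_{X,v}$.} Let $\theta$ be admissible, i.e.\ $(X,0)\subset P_v(\theta,c)$. Take $r$ small and any $z\oplus tv\in X$ with $\|z\|=r$. By Remark \ref{jeden}, $|t|\leqslant c'r$, so these points lie in a small neighbourhood of $0$ on which the inclusion holds. Hence $|t|\leqslant cr^\theta$, and taking the maximum gives $g_{X,v}(r)\leqslant cr^\theta$. This makes $\theta$ admissible in the geometric characterisation of $\mathrm{ord}_0 g_{X,v}$.

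\textbf{Inequality $\mathrm{ord}_0 g_{X,v}\leqslant\theta_v(X,0)$.} Let $\eta$ satisfy $g_{X,v}(r)\leqslant Cr^\eta$ for $0\leqslant r<r_0$. Take $x=z\oplus tv\in X$ near $0$; then $\|z\|<r_0$. By definition of the maximum, $|t|\leqslant g_{X,v}(\|z\|)\leqslant C\|z\|^\eta$. Points with $z=0$ need separate care: Remark \ref{jeden} forces $t=0$ for them, which is consistent with $g_{X,v}(0)=0$. Hence a representative of $(X,0)$ lies in $P_v(\eta,C)$, and $\eta\leqslant\theta_v(X,0)$.

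\textbf{Taking suprema and the main obstacle.} Taking suprema over admissible exponents in both inequalities gives the equality. The only delicate point is the bookkeeping of neighbourhoods. The germ inclusion in Definition \ref{dc} refers to a neighbourhood of $0$ in $\mathbb{R}^n$, while $g_{X,v}$ is defined via $\|z\|=r$ over all of $X$. One must therefore ensure that only points near the origin enter the maximum, which requires working with a sufficiently small representative of $X$. The cone estimate of Remark \ref{jeden} handles exactly this, since smallness of $\|z\|$ forces smallness of $|t|$. Finally, the fact that the maximum in $g_{X,v}$ is attained, which follows from compactness of $X\cap\{\|z\|=r\}$ within a closed ball, and that the set of admissible exponents is an interval (shrinking $\theta$ only enlarges $P_v$ near $0$), make the two suprema match with no further work.
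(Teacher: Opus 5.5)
Your proof is correct and rests on the same two inequalities as the paper's: the inclusion $(X,0)\subset P_v(\theta,c)$ gives $g_{X,v}(r)\leqslant c\,r^{\theta}$, and conversely $|t|\leqslant g_{X,v}(\|z\|)$ turns a bound on $g_{X,v}$ into an inclusion in some $P_v(\eta,C)$. The paper routes this through a definable selection $r\mapsto z(r)\oplus\tau(r)v$ with $g_{X,v}=\tau$ and uses that both exponents are attained; you instead show the two sets of admissible exponents coincide, which removes the need for the selection and for attainment, and you handle the small representative and the degenerate case $(X,0)\subset(\mathbb{R}v)^\bot$ more carefully than the paper does.
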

\begin{proof} Obviously, there is nothing to prove when $(X,0)=(\mathbb{R}^n,0)$ as both sides are $+\infty$. Let us consider the remaining case. The set $$E:=\{(r,z\oplus tv)\in [0,\varepsilon)\times X\mid \|z\|=r, g_{X,v}(\|z\|)=|t|\}$$ is definable and has an accumulation point at the origin. Therefore, there is a continuous definable selection $\gamma\colon [0,\varepsilon)\ni r\mapsto (r, z(r)\oplus \tau(r) v)\in E$. Then $z(r)$ is automatically parameterised by arc-length: $\norm{z(r)} = r$, and $\tau(r)$ being definable, we may assume it has a constant sign, say $\tau(r)\geqslant 0$. It follows that $g_{X,v}(r)=\tau(r)$ so that $\mathrm{ord}_0g_{X,v}=\mathrm{ord}_0\tau$. Clearly, $\tau(r)\leqslant \mathrm{const.}\|z(r)\|^{\theta_v(X,0)}$ so that $\mathrm{ord}_0\tau\geqslant \theta_v(X,0)$. On the other hand, given $z\oplus tv\in X$ near the origin, we have $$|t|\leqslant g_{X,v}(\|z\|)=\tau(\|z\|)\leqslant \mathrm{const.}\|z\|^{\mathrm{ord}_0 \tau}$$ whence $\theta_v(X,0)\geqslant \mathrm{ord}_0 \tau$ and we are done.
\end{proof}

\begin{lemma}\label{order}
   For $g_X(r):=\max\{\|y\|\mid (x,y)\in X, \|x\|=r\}$, $0\leq r\ll 1$, which is a definable function, there is $\mathrm{ord}_0 g_X=\theta(X,0)$. 
\end{lemma}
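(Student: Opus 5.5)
The plan is to copy the proof of Lemma \ref{rzad}, replacing the one-dimensional component $t$ by the vector $y\in\mathbb{R}^{n-k}$. The coordinates will be $x\in\mathbb{R}^k$, which spans $\mathrm{Aff}(C_0X)$ by $(*)$, and $y\in\mathbb{R}^{n-k}$.

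First, the degenerate case. If $(X,0)\subset\mathbb{R}^k\times\{0\}^{n-k}$, then $g_X\equiv 0$ near $0$ and $\theta(X,0)=+\infty$, so both sides equal $+\infty$. From now on I assume $(X,0)\not\subset\mathbb{R}^k\times\{0\}^{n-k}$. Then $\theta(X,0)$ is finite and attained by the lemma following Definition \ref{gc}.

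Next, the function $g_X$ itself. By Remark \ref{pierwsza}, $X\cap U\subset\{\|y\|\leqslant C\|x\|\}$. Hence for small $r$ the set $\{(x,y)\in X\mid\|x\|=r\}$ is compact, possibly empty. The maximum is therefore attained whenever the set is nonempty, and $g_X(r)\leqslant Cr$, so $g_X(0)=0$. If this set is empty for a sequence of $r$, then by definability it is empty on a whole interval $(0,\varepsilon)$, and $X$ reduces near $0$ to $\{0\}$. That case is trivial, so I may assume the set is nonempty for all small $r>0$.

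The next step is to build the definable set
$$E:=\{(r,(x,y))\in[0,\varepsilon)\times X\mid \|x\|=r,\ \|y\|=g_X(r)\}$$
and take a definable selection $r\mapsto (x(r),y(r))$, using definable choice. Then $g_X(r)=\|y(r)\|$, and $\|y\|$ is a definable function of one variable. In a polynomially bounded structure it is either identically $0$ near $0$ or equivalent to $cr^\eta$ with $\eta=\mathrm{ord}_0 g_X$. The identically zero case contradicts the assumption $(X,0)\not\subset\mathbb{R}^k\times\{0\}$, because every point of $X$ satisfies $\|y\|\leqslant g_X(\|x\|)$.

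The two inequalities then follow directly. Since $(x(r),y(r))\in X$, the attained bound $\|y\|\leqslant c\|x\|^{\theta(X,0)}$ gives $g_X(r)\leqslant cr^{\theta(X,0)}$, hence $\mathrm{ord}_0g_X\geqslant\theta(X,0)$. Conversely, for any $(x,y)\in X$ near $0$ we have $\|y\|\leqslant g_X(\|x\|)\leqslant \mathrm{const.}\|x\|^{\mathrm{ord}_0 g_X}$. So $\mathrm{ord}_0g_X$ is admissible in the supremum of Definition \ref{gc}, giving $\theta(X,0)\geqslant\mathrm{ord}_0g_X$.

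I expect the main obstacle to be bookkeeping rather than substance. It lies in justifying that $g_X$ is well defined and definable, meaning the maximum is attained, which uses compactness from Remark \ref{pierwsza}. It also lies in the non-emptiness of the fibres, which uses connectedness of $(0,\varepsilon)$ via definability. An alternative route would combine Theorem \ref{theta} with Lemma \ref{rzad}, but comparing $\max_y\|y\|$ with $\min_v$ of the directional orders seems messier than the direct argument.
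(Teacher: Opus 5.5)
Your proposal is correct and follows essentially the same route as the paper. The paper also sets aside the case $(X,0)\subset\mathbb{R}^k\times\{0\}^{n-k}$, takes a continuous definable selection in the same set $E$ so that $g_X(r)=\|y(r)\|$, and derives the two inequalities exactly as you do; your extra checks (that the maximum is attained, that the fibres are non-empty, and that $g_X$ is not identically zero) make explicit what the paper leaves implicit.
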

\begin{proof}
We may assume that $X$ is not contained entirely in $\mathbb{R}^k\times\{0\}^{n-k}$ so that $\theta(X,0)$ and $\mathrm{ord}_0 g_X$ are both finite.
The proof is similar to the previous one in that we consider the definable set
$$
E:=\{(r,x\oplus y)\in [0,\varepsilon)\times X\mid ||x||=r, g_X(||x||)=||y||\}
$$
that has an accumulation point at the origin. Then we use the continuous definable selection $\gamma(r)=(r,x(r)\oplus y(r))$ where $||x(r)||=r$ and $g_X(r)=||y(r)||$. Thus $\mathrm{ord}_0g_X=\mathrm{ord}_0 ||y(\cdot)||$. Since $x(r)\oplus y(r)\in X$, it follows that $||y(r)||\leq \mathrm{const.}||x(r)||^{\theta(X,0)}$ whence $\mathrm{ord}_0 g_X\geq \theta(X,0)$. On the other hand, for $(x,y)\in X$, $||y||\leq g_X(||x||)\leq \mathrm{const.}||x||^{\mathrm{ord}_0 g_X}$, whence $\theta(X,0)\geq \mathrm{ord}_0 g_X$.
\end{proof}

\begin{theorem}\label{kierunek1}
    Let $a\in X$ and $v\in V_a(X)$. If the directional reaching radius vanishes: $r_v(a)=0$, then $v$ is tangent to $M_X$ (i.e. $v\in C_aM_X$).
\end{theorem}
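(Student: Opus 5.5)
Since $r_v(a)=0$, for every $t>0$ the point $a$ is not among the closest points of $a+tv$ in $X$. The plan is to find, for each small $t>0$, a point $q_t\in M_X$ with $\|q_t-(a+tv)\|=o(t)$. Then $q_t\to a$ and $(q_t-a)/t\to v$, which is exactly the statement $v\in C_aM_X$ by the definition of the Peano tangent cone. Since $M_X$ and $\overline{M_X}$ have the same tangent cone, it suffices to produce points of $C_X\subset\overline{M_X}$ (recalled in Section \ref{plane geq}) and then perturb them slightly inside $M_X$.

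\textbf{Step 1: choose the base points.} Fix $t>0$ small, put $p_t=a+tv$, and pick $x_t\in m(p_t)$. Then
$$
\|x_t-p_t\|=d(p_t,X)\leqslant \|a-p_t\|=t,
$$
so $\|x_t-a\|\leqslant 2t$. Moreover the inequality is strict, since $a\notin m(p_t)$.

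\textbf{Step 2: show $x_t$ is close to $a$.} We need $\|x_t-a\|=o(t)$. Suppose instead that $\|x_t-a\|\geqslant \varepsilon t$ along some sequence $t\to0$. Rescale by $1/t$ and pass to a subsequence so that $(x_t-a)/t\to w\in C_aX$ with $w\neq 0$. From $\|x_t-p_t\|\leqslant t$ we get $\|w-v\|\leqslant 1$. Since $v\in N_a(X)$, we have $\langle v,w\rangle\leqslant 0$, and therefore
$$
\|w-v\|^2=\|w\|^2-2\langle v,w\rangle+1\geqslant \|w\|^2+1>1,
$$
a contradiction. Hence $x_t=a+o(t)$.

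\textbf{Step 3: locate a point of $C_X$ near $p_t$.} Consider the segment from $x_t$ in the direction $u_t=(p_t-x_t)/\|p_t-x_t\|$. The segment $[x_t,p_t]$ realises the distance to $X$, so $x_t\in m(x_t+su_t)$ for $s\leqslant\|p_t-x_t\|$. Let $s_t=r_{u_t}(x_t)$, which is at least $\|p_t-x_t\|$. If the maximal ball $B(x_t+s_tu_t,s_t)$ has centre $c_t$ with $s_t$ finite, then $c_t\in C_X$, as noted in Section \ref{plane geq} via the directional reaching radius.

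The key claim is that $s_t-\|p_t-x_t\|=o(t)$. Note also that $u_t\to v$ by Step 2, and $\|p_t-x_t\|=t+o(t)$. Suppose $s_t\geqslant(1+\varepsilon)t$ along a subsequence. Then the open ball $B(x_t+(1+\varepsilon)tu_t,(1+\varepsilon)t)$ misses $X$, and after rescaling it converges to the ball $B((1+\varepsilon)v,1+\varepsilon)$. This ball contains $v$ with margin $\varepsilon$, so $B(p_t,\varepsilon t/2)\cap X=\varnothing$ for large $t$. That is consistent with the facts so far, so this alone gives no contradiction.

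To obtain one, I would use $r_v(a)=0$ more quantitatively. The ball $B(x_t+(1+\varepsilon)tu_t,(1+\varepsilon)t)$ misses $X$ and passes within $o(t)$ of $a$, and I would try to compare it with the balls $B(a+sv,s)$, $s\leqslant t$. This comparison is the main obstacle. A simple limiting argument may fail here, and one may need to iterate: apply the construction at $x_t$ and use that $\liminf r'$ is small, or invoke definability through curve selection to choose $t\mapsto x_t$ definable and control $s_t$ via the tangency of a definable family of spheres.

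If the claim holds, then $c_t=x_t+s_tu_t=a+tv+o(t)$, and this gives $v\in C_a\overline{M_X}=C_aM_X$.
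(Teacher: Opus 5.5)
\textbf{Verdict: there is a genuine gap in Step 3.} Your Steps 1 and 2 are correct. Step 2 is actually a slicker route than the paper's proximality computation to $\|x_t-a\|=o(t)$, and hence to $u_t\to v$. Step 3, however, is never completed. Worse, the ``key claim'' $s_t-\|p_t-x_t\|=o(t)$ that you are aiming for is false in general, so no comparison with the balls $B(a+sv,s)$ can deliver it.

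\textbf{Counterexample to the key claim.} Take the single branch $X\colon y=x^{3/2}$, $x\geqslant 0$ (Figure~\ref{sch}), with $a=0$ and $v=(0,1)$; here $r_v(0)=0$. Then $x_t\approx(\tfrac94t^2,\tfrac{27}{8}t^3)$ and $\|p_t-x_t\|=t+O(t^3)$. Rescale by $x=t^2X$, $y=t^3Y$, and consider the ball of radius $\sigma t$ tangent to $X$ at $x_t$ with centre $x_t+\sigma t u_t$. To leading order it lies above the curve iff $\frac{X}{12}(2\sqrt{X}-3)^2+(\frac{1}{2\sigma}-\frac13)(X-\frac94)^2\geqslant 0$ for all $X\geqslant 0$, i.e.\ iff $\sigma\leqslant\frac32$. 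The maximal ball therefore first meets $X$ again at the endpoint $0$. Consequently $s_t\approx\frac32 t$, $c_t\approx\frac32 tv$, and $s_t-\|p_t-x_t\|\approx\frac t2$, not $o(t)$.

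\textbf{The missing idea.} You never need $c_t=a+tv+o(t)$. To get $v\in C_aM_X$ it suffices that $c_t\to a$ and $(c_t-a)/\|c_t-a\|\to v$.
\begin{itemize}
\item The direction condition is automatic. Indeed, $c_t-a=tv+(s_t-\|p_t-x_t\|)u_t$ is a combination of $v$ and $u_t$ with coefficients $t>0$ and $\geqslant 0$, and $u_t\to v$.
\item For $c_t\to a$ you need $s_t\to 0$, which also gives $s_t<\infty$ and hence $c_t\in C_X$. This follows from your own limiting-ball argument, run at a \emph{fixed} scale $\varrho>0$ instead of at scale $t$. Suppose $s_t>\varrho$ along a sequence $t\to 0$. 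Then $x_t\in m(x_t+\varrho u_t)$, so the open balls $B(x_t+\varrho u_t,\varrho)$ miss $X$. Every point of $B(a+\varrho v,\varrho)$ lies in these balls for small $t$, so this limit ball misses $X$ as well. Hence $a\in m(a+\varrho v)$ and $r_v(a)\geqslant\varrho>0$, contradicting $r_v(a)=0$.
\end{itemize}

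This is exactly how the paper finishes. It proves $r_{v(t)}(y_t)\to 0$ by this fixed-scale argument, and then reads off the limiting direction of $\xi(t)-a=(r_{v(t)}(y_t)-\|a+tv-y_t\|)v(t)+tv$.
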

\begin{proof}
    Obviously, the reaching radius vanishes as well, $r(a)=0$, so that $a$ belongs to the closure of $M_X$ and the tangent cone $C_aM_X$ is well defined. Notice also that for any $t>0$ we have $a\notin m(a+tv)$. Therefore, we can pick points $a\neq y_t\in m(a+tv)$ and, since $a+tv\to a$ when $t\to 0^+$, regardless of their choice, we have $y_t\to a$, as $m(a)=\{a\}$ and the multifunciton of closest points is upper semi-continuous in the Painlev\'e-Kuratowski sense (see \cite{BD}).
    
     We will use and extend a part of the proof of Theorem 4.35 in \cite{BD}. Denote \[v(t):=\frac{a+tv-y_t}{\|a+tv-y_t\|},\] we obviously get $r_{v(t)}(y_t)$ positive. We claim now that $v(t)\to v$ as $t\to 0^+$.

    Firstly, let us recall that a normal vector $u\in V_a(X)$ is called \textit{proximal at $a$,} if for some $\delta>0$, the point $a$ is the closest point to $a+\delta u$ in $X$. This is equivalent to saying that for any $x\in X$, we have $\langle x-a,u\rangle\leqslant(1/2\delta)\|x-a\|^2.$ 

    In our case, we know that the vectors $v(t)$ are proximal at $y_t$.

    Let us now assume, without loss of generality, that the unit vectors $w(t):=(y_t-a)/\|y_t-a\|$ converge to some $w\in C_aX$. Let us denote $\beta_t:=\langle w(t),v\rangle$.

    Then we observe that $\beta_t\to 0.$ Indeed, $w\in C_aX$ implies $\langle v, w\rangle\leqslant 0$. If there were $\langle v,w\rangle <0$, we would have $\langle v,w(t)\rangle <0$, for $t$ small enough. The inequality 
    \[ \|a+tv-y_t\|^2=t^2+\|y_t-a\|^2-2t\|y_t-a\|\beta_t>t^2\] 
    would lead to a contradiction, since the points $y_t$ were assumed to be closer to $a+tv$ than $a$.

    Now, $v(t)$ being proximal at $y_t$ with $\delta = d(a+tv,X)$, we can write 
    \[\langle a-y_t,v(t)\rangle\leqslant\frac{1}{2d(a+tv,X)}\|a-y_t\|^2\]
    which is equivalent to 
    \begin{align*}\langle a-y_t,a+tv-y_t\rangle  &\leqslant \frac{1}{2}\|a-y_t\|^2   \\
    \|a-y_t\|^2+ t\langle a-y_t,v\rangle        &\leqslant \frac{1}{2}\|a-y_t\|^2   \\
    \|a-y_t\|^2               &\leqslant 2 t\langle y_t-a,v\rangle =                \\
                                                &=2t\|y_t-a\|\beta_t
    \end{align*}
    It follows that $\|a-y_t\|/t\to 0$.

    It remains to show that $\langle v(t),v\rangle$ converges to $1$. Obviously $\langle v(t),v\rangle\leq 1$, both vectors being unitary. Since $\|a+tv-y_t\|<t$, we have
    \begin{align*}
        \langle v(t),v\rangle &= \frac{1}{\|a+tv-y_t\|}\langle a+tv-y_t,v\rangle = \\
                            &=\frac{1}{\|a+tv-y_t\|}(\langle a-y_t,v\rangle +t)> \\
                            &>\frac{1}{t}(\langle a-y_t,v\rangle +t)=\frac{\langle a-y_t,v\rangle}{t} +1. 
    \end{align*}
    But since $\|a-y_t\|/t\to 0$ we have proved the claim.

    Next, we argue that the directional reaching radius $r_{v(t)}(y_t)\to 0$ as $t\to 0^+$. If we assume otherwise, then, without loss of generality, there is a constant $\varrho>0$ such that $r_{v(t)}(y_t)>\varrho$ for all $t$. Consequently, the open balls $B(y_t+\varrho\, v(t),\varrho)$ do not meet $X$ and converge to the closure of $B(a+\varrho\,v,\varrho).$ By the convergence, the latter open ball does not meet $X$ either. But this contradicts $r_v(a)=0$.

    In particular $r_{v(t)}(y_t)$ is finite for $t$ small enough. Hence, we can pick a sequence of points $\xi(t):=r_{v(t)}(y_t)v(t)+y_t \in \overline{M_X}$. For such a sequence, we have   
    \[\frac{\xi(t)-a}{\|\xi(t)-a\|} \to v.\]
    Indeed, we can rewrite $\xi(t)-a = (r_{v(t)}(y_t)-\|tv-y_t+a\|)v(t)+tv.$ Now since $(\xi(t)-a)/\|\xi(t)-a\|$ is normalised, $v(t)\to v$ and $r_{v(t)}(y_t)-\|tv-y_t+a\|\geqslant 0$ we can see that convergence holds.
    
\end{proof}

We are now ready for the general criterion.

\begin{theorem}\label{kierunek}
    If $\theta_v(X,0)<2$, then $0\in\overline{M_X}$. Actually, $$0\in \overline{M_X\cap\{(z\oplus tv)\in (\mathbb{R} v)^\bot\oplus \mathbb{R}v\mid |t|>\mathrm{const.} \|z\|^{\theta_v(X,0)}\}}.$$ Moreover, $\dim C_0M_X\cap\mathbb{R}v=1$, i.e. either $v$ or $-v$ is tangent to $M_X$ at the origin. 
\end{theorem}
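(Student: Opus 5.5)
The plan is to reduce everything to Theorem \ref{kierunek1}: show that $\theta_v(X,0)<2$ forces the directional reaching radius $r_w(0)=0$ for $w=v$ or $w=-v$. Both $\pm v$ lie in $V_0(X)$ by Remark \ref{jeden}. Theorem \ref{kierunek1} then gives $w\in C_0M_X$; in particular $0\in\overline{M_X}$ and $\dim C_0M_X\cap\mathbb{R}v=1$. For the refined statement about the region $|t|>\mathrm{const.}\|z\|^{\theta_v}$, I will reuse the points $\xi(t)$ built in that proof, which satisfy $(\xi(t)-a)/\|\xi(t)-a\|\to w$. Writing $\xi(t)=z\oplus sw$, this gives $\|z\|=o(|s|)$. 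Since $\theta_v\ge 1$, we get $|s|>\mathrm{const.}\|z\|^{\theta_v}$ for $t$ small. The points $\xi(t)$ lie in $\overline{M_X}$, but they can be replaced by nearby points of $M_X$: the region is open when $\theta_v\ge1$ and the strict inequality has room to spare.

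The key step is to prove $r_w(0)=0$. Write $\theta=\theta_v(X,0)<2$ and use Lemma \ref{rzad}, so $g_{X,v}(r)=cr^\theta+o(r^\theta)$ with $c>0$. By the definable selection from the proof of Lemma \ref{rzad}, there is a curve $r\mapsto z(r)\oplus\tau(r)v\in X$ with $\|z(r)\|=r$ and $|\tau(r)|=g_{X,v}(r)$. Passing to a subcurve, $\tau$ has a constant sign, and I choose $w=\mathrm{sgn}(\tau)\,v$. Then $\langle x(r),w\rangle = cr^\theta(1+o(1))$ along the curve, where $x(r)\in X$ and $\|x(r)\|\sim r$.

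Now suppose $r_w(0)\ge\delta>0$. Then $0\in m(\delta w)$, so $w$ is proximal at $0$, which means
\[\langle x,w\rangle\leqslant \tfrac{1}{2\delta}\|x\|^2\quad\text{for all } x\in X.\]
Applying this to $x(r)$ gives $cr^\theta(1+o(1))\le \mathrm{const.}\,r^2$. This is impossible as $r\to0^+$ because $\theta<2$. Hence $r_w(0)=0$.

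The step I expect to be the main obstacle is matching the directional radius with proximality. Once $r_w(0)>0$, for every $0<s<r_w(0)$ one must have $0\in m(sw)$, not only at the supremum. Closest-point sets along a normal ray are monotone: if $0\in m(s_0w)$, then $0\in m(sw)$ for all $s\le s_0$, by the ball inclusion $B(sw,s)\subset B(s_0w,s_0)$. So this step is fine. The remaining subtlety is the degenerate case $(X,0)\subset(\mathbb{R}v)^\perp$, where $\theta_v=+\infty$ and the hypothesis cannot hold. In every other case $\theta$ is finite and attained by Remark \ref{osiaganie}, which is exactly what guarantees $c>0$ above.
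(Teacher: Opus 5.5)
Your proposal is correct and follows essentially the paper's route. Both arguments reduce to Theorem \ref{kierunek1} by showing $r_w(0)=0$ for $w=\pm v$, using the definable selection curve that realises $g_{X,v}$; since its order is $\theta_v(X,0)<2$, this curve enters every ball $B(\rho w,\rho)$. The differences are minor: you check the ball-entering property directly through the proximal inequality $\langle x,w\rangle\leqslant\|x\|^2/(2\delta)$ rather than projecting to a plane and citing \cite{BD} Lemma 3.17, and you spell out how the refined region statement follows from tangency, which the paper leaves implicit.
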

\begin{proof}
Consider the selection $\gamma(r)=(r, z(r)\oplus \tau(r) v)$ from the proof of Lemma \ref{rzad}, with $\tau(r)\geqslant 0$. Thanks to the definability of $\xi(r):=z(r)\oplus \tau(r) v\in X$, which is non-constant, we know that the tangent cone to its image $C_0(\xi) = \mathbb{R}_+w$, for some $w\in C_0X\subset (\mathbb{R}v)^{\perp}$. Thus, $\langle v,w\rangle = 0$. Incidentally, $\mathbb{R}_+w$ is the half-line tangent to the image of $z(r)$.

If we prove that the directional reaching radius $r_v(0)=0$, then we get 
$0\in \overline{M_X}$ and $\dim C_0M_X\cap\mathbb{R}v=1$ by Theorem \ref{kierunek1}.

    Let us consider $\widehat{\tau}\colon \mathbb{R}_+w\ni sw \to \tau(s)\in \mathbb{R}_t$, then $\mathrm{ord}_0\widehat{\tau} = \mathrm{ord}_0 \tau$ and the latter coincides with $\theta_v(X,0)$ as was shown in the proof of Lemma \ref{rzad}. Therefore, $\widehat{\tau}$ is superquadratic in the sense of \cite{BD} Section 3.3, which means that its graph $\Gamma_{\widehat{\tau}}$ enters any disc $D((0,\rho),\rho)$, $0<\rho\ll 1$, cf. \cite{BD} Lemma 3.17.  This graph is the projection onto the plane $\mathbb{R}w\oplus \mathbb{R}v$ of the graph $\Gamma_\xi$ of $\xi$. Eventually, by construction, we conclude that the graph of $\xi$ enters any ball $B(\rho v,\rho)$, $0<\rho\ll 1$, so that $0\notin m(\rho v)$, whence $r_v(0)=0$, as required.
\end{proof}

\begin{corollary}\label{wniosek}
    If $\theta(X,0)<2$, then $0\in\overline{M_X}$.
\end{corollary}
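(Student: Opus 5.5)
The plan is to reduce the corollary to Theorem \ref{kierunek} by means of Theorem \ref{theta}. Suppose $\theta(X,0)<2$. Then in particular $(X,0)\not\subset\mathbb{R}^k\times\{0\}^{n-k}$, since otherwise $\theta(X,0)=+\infty$. Theorem \ref{theta} expresses the general curvature as a minimum of directional curvatures:
$$
\theta(X,0)=\min\{\theta_v(X,0)\mid v\in V_0^C(X)\}.
$$
So it suffices to know that this minimum is actually attained by some camber direction $v_0\in V_0^C(X)$. For such a $v_0$ we would have $\theta_{v_0}(X,0)=\theta(X,0)<2$, and Theorem \ref{kierunek} would give $0\in\overline{M_X}$ directly. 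It would even show that $v_0$ or $-v_0$ is tangent to $M_X$ at the origin.

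The only delicate point is whether the minimum in Theorem \ref{theta} is genuinely attained rather than being merely an infimum. The proof given there shows that the infimum over all unit $v\in\{0\}^k\times\mathbb{R}^{n-k}$ equals the minimum over a spanning family $v_1,\dots,v_{n-k}$. Concretely, it argues that $\theta_v\geqslant\min_i\theta_{v_i}$ for every $v$. Since each $v_i$ is itself a camber direction, the smallest $\theta_{v_i}$ realises the minimum.

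To be safe, I would also give a direct argument bypassing this subtlety. Take an orthonormal basis $v_1,\dots,v_{n-k}$ of $\{0\}^k\times\mathbb{R}^{n-k}$ and write $y=\sum_i\langle v_i,x\rangle v_i$. Assume, for contradiction, that $\theta_{v_i}(X,0)\geqslant 2$ for all $i$. By Lemma \ref{tangentcomponent}, together with the attainment from Remark \ref{osiaganie} (or trivially when $\theta_{v_i}=\infty$), we get $|\langle v_i,x\rangle|\leqslant c_i\|x_1\|^2$ on $X$ near $0$. Here $x_1=\pi(x)$ is just the $\mathbb{R}^k$-coordinate. Summing over $i$ yields $\|y\|\leqslant C\|x_1\|^2$, hence $\theta(X,0)\geqslant 2$, which is a contradiction.

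Therefore some $v_i$ satisfies $\theta_{v_i}(X,0)<2$, and Theorem \ref{kierunek} applied to $v=v_i$ concludes. I expect no real obstacle. The only care needed is the uniformity of the constants when summing the finitely many estimates, and this is immediate since the basis is finite.
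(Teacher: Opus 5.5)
Your proposal is correct and follows the paper's argument: Theorem \ref{theta} gives a camber direction $v$ with $\theta_v(X,0)<2$, and Theorem \ref{kierunek} then gives $0\in\overline{M_X}$. Your supplementary basis-summation argument is a sound, self-contained proof of $\theta(X,0)\geqslant\min_i\theta_{v_i}(X,0)$. The paper's proof of Theorem \ref{theta} leaves this inequality implicit, and your argument does not even need the attainment from Remark \ref{osiaganie}, since showing $\theta(X,0)\geqslant\theta$ for every $\theta<2$ suffices.
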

\begin{proof}
    It follows directly from Theorems \ref{theta} and \ref{kierunek}, since there must be a direction $v$ in which $\theta_v(X,0)<2$.
\end{proof}
\begin{example}\label{horn}
    Consider the horn $X\colon x^3=y^2+z^2$. Its medial axis consists of the positive part of the $x$-axis together with a punctured revolution surface in the half-space $\{x\leqslant 0\}$. Since $C_0X=[0,+\infty)\times\{0\}^2$, the camber directions are $\{0\}\times\mathbb{R}^2\cap\mathbb{S}^2$ and it is easy to see that $\theta(X,0)=\frac{3}{2}$. The Theorem applies and indeed $0\in\overline{M_X}$. Note however, that we completely miss the $x$-axis part of the medial axis which also reaches the origin. The Theorem detects far less obvious part of the medial axis. Observe also, that in the case of a horn we cannot apply the Tangent Cone Criterion, nor any previously developed idea of superquadracity as the origin is a singular point.
\end{example}

\section{Higher dimensions and $\theta\geqslant 2$}

\begin{proposition}\label{higher geq 2} There is a counterexample to the statement that for $n\geqslant 3$ and a definable, non-superquadratic $(X,0)\subset\mathbb{R}^n$, there is $0\notin\overline{M_X}$.\end{proposition}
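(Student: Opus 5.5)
The plan is to exhibit an explicit semi-algebraic germ in $\mathbb{R}^3$ for which the general curvature is infinite, so certainly $\theta(X,0)\geqslant 2$, yet the medial axis reaches the origin. The mechanism is the one behind the Tangent Cone Criterion of \cite{BD}. Superquadraticity only measures how far $X$ bends away from $\mathrm{Aff}(C_0X)$. It is blind to non-convexity of $C_0X$ \emph{inside} $\mathrm{Aff}(C_0X)$ when $k\geqslant 2$. In the plane this phenomenon is excluded by $(**)$, but no such condition is imposed for $n\geqslant 3$.

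Concretely, I would take
$$
X:=\{(x,y,0)\in\mathbb{R}^3\mid xy\leqslant 0\},
$$
which is the union of two opposite closed quadrants $Q_1=\{x\geqslant 0,\ y\leqslant 0,\ z=0\}$ and $Q_2=\{x\leqslant 0,\ y\geqslant 0,\ z=0\}$ of the horizontal plane. It is closed and semi-algebraic, hence definable in any polynomially bounded structure. The checks are as follows.

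First, $X$ is a cone, so $C_0X=X$. Its affine hull is $\mathbb{R}^2\times\{0\}$, so $(*)$ holds with $k=2<n=3$.

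Second, $(X,0)\subset\mathbb{R}^2\times\{0\}$, hence $\|y\|\equiv 0$ on $X$ in the notation of Definition \ref{gc}. The supremum defining $\theta(X,0)$ is therefore $+\infty$; equivalently, by Lemma \ref{order}, $g_X\equiv 0$. So $X$ is not superquadratic.

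Third, I would verify directly that $0\in\overline{M_X}$. For $t>0$ put $p_t=(t,t,0)$. The distance from $p_t$ to $Q_1$ is the distance to the boundary ray $\{x\geqslant 0, y=0\}$, attained only at $(t,0,0)$ and equal to $t$. Symmetrically, the distance to $Q_2$ is attained only at $(0,t,0)$ and equals $t$. Thus $m(p_t)=\{(t,0,0),(0,t,0)\}$, so $p_t\in M_X$, and $p_t\to 0$. Alternatively, one could simply invoke the Tangent Cone Criterion, since $C_0X$ is non-convex.

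The only point that needs care is whether such a degenerate example is acceptable. The germ is not pure-dimensional in any problematic way, but it lies in a hyperplane. If one prefers an example not contained in $\mathrm{Aff}(C_0X)$, I would perturb it to $X'=\{(x,y,z)\mid xy\leqslant 0,\ z=(x^2+y^2)\}$. This is the graph of a smooth function over the same non-convex set, so $\theta(X',0)=2$. The points $p_t=(t,t,s(t))$, with $s(t)$ chosen on the symmetry plane $x=y$, stay equidistant from the two sheets by the symmetry $(x,y,z)\mapsto(y,x,z)$. This gives two closest points, exchanged by the symmetry, and hence points of $M_{X'}$ accumulating at $0$. The symmetry argument makes the verification routine, so I expect no genuine obstacle.
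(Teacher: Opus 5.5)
Your counterexample is correct for the statement as literally posed. $X=\{xy\leqslant 0,\ z=0\}$ satisfies $(\ast)$ with $k=2$, has $\theta(X,0)=+\infty\geqslant 2$, and $m((t,t,0))=\{(t,0,0),(0,t,0)\}$, so $0\in\overline{M_X}$.

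It works by a genuinely different and much cheaper mechanism than the paper's. You reach $0$ through the non-convexity of $C_0X$ inside $\mathrm{Aff}(C_0X)$, with $M_X$ approaching along the horizontal direction $(1,1,0)$. That is exactly what the Tangent Cone Criterion already detects. It is also the higher-dimensional version of what horizontal filling was introduced to exclude in the plane.

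The paper instead takes the two solid cones $\{y^2+z^2\leqslant x^2\}$ and $\{x^2+z^2\leqslant y^2\}$, cut by $|z|\leqslant x^2+y^2$. This $X$ has the following features:
\begin{itemize}
\item it contains the whole plane $\mathbb{R}^2\times\{0\}$, so it is filled horizontally in the most naive sense;
\item $C_0X=\mathbb{R}^2\times\{0\}$ is linear, so the Tangent Cone Criterion is unavailable;
\item $\theta(X,0)=2$ is finite;
\item yet $M_X$ still accumulates at $0$, even along the vertical camber direction. It contains the points $(t,\pm t,z)$ with $t\neq 0\neq z$, which lie in thin cuspidal gaps of the complement along the diagonals.
\end{itemize}
So your example refutes the bare statement with a one-line check. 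The paper's example shows something more informative: the plane result fails in higher dimension even after imposing the natural analogues of its hypotheses (linear tangent cone, filling).

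Two small points. First, in the plane it is $(\ast)$ itself, not $(\ast\ast)$, that rules out a non-convex $C_0X$, since a closed cone in a line is convex. Second, for your perturbed $X'$ you should state explicitly two facts. One is that $X'\cap\{x=y\}=\{0\}$. The other is that $0\notin m(p_t)$; for instance, $(t,0,t^2)\in X'$ is closer to $(t,t,0)$ than $0$ is. Together these guarantee that every closest point has a distinct mirror image.
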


\begin{proof} 
Let us consider $\mathbb{R}^3$. Let $X$ be defined as follows:
\begin{align*}
X':&= \left(\bigcup_{t\in \mathbb{R}} B((t,0,0), |t|)\cap \{(x,y,z)\mid  x=t\}\right) \cup\\
&\cup \left(\bigcup_{t\in \mathbb{R}} B((0,t,0), |t|)\cap \{(x,y,z)\mid  y=t\}\right).
\end{align*}
Next, we `cut off' this set:
$$
X := X'\cap \{(x,y,z)\mid  |z|\leqslant x^2+y^2\}
$$
obtaining a non-superquadratic, definable $X$ consisting of four identical parts. Let us notice that
$$
\{(x,y,z)\mid  (x,y)\neq 0\ \land\ z\neq 0\ \land \ |x|=|y|\} \cup \left\{(0,0,t)\mid\ |t|>\frac{1}{4}\right\}\subset M_X
$$
$$
\left\{(0,0,t)\mid |t|\leqslant\frac{1}{4}\right\}\cap M_X = \varnothing
$$
so the figure described above keeps $0\in\overline{M_X}$, moreover $M_X$ is `glued' to the vertical $z$-direction at zero, which is an axis that it does not contain.
\end{proof}

\begin{figure}[h]
\includegraphics[width=8cm]{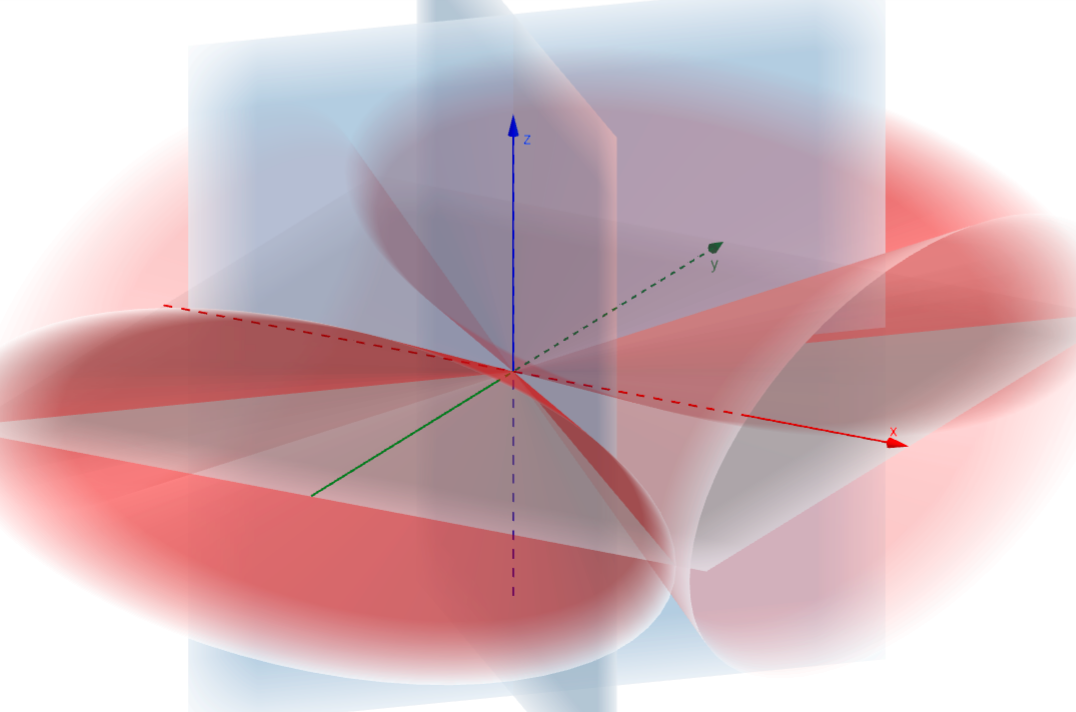}
\centering
\end{figure}

\section{Acknowledgements}

During the preparation of this article Dominik Bysiewicz was a participant of the tutoring programme under the Excellence Initiative at the Jagiellonian University.

\end{document}